\title[On the asymptotic Boolean independence] {An asymptotic property of large matrices with identically distributed Boolean independent entries}
\author{Mihai Popa and Zhiwei Hao}
\address{ Department of Mathematics, University of Texas at San Antonio, One UTSA Circle San Antonio, Texas 78249, USA, and
}
\address{
"Simion Stoilow" Institute of Mathematics of the Romanian Academy, P.O. Box 1-764,
Bucharest, RO-70700, Romania}
\email{mihai.popa@utsa.edu}
\address{School of Mathematics and Statistics, Central South University, Changsha
410083, China, and}
\address{ Department of Mathematics, University of Texas at San Antonio, One UTSA Circle San Antonio, Texas 78249, USA
}
\email{zw\underline{ }hao8@163.com}
\thanks{{\it Mathematics Subject Classification:} Primary 60B20; Secondary 46L53, 06A07.}
\thanks{{\it Key words:} Random matrix, Boolean independence, Bernoulli distribution, transpose.}
\thanks{This work was partially supported by the NNSFC Cooperation Grant No. 201506370093}
\newtheorem{claim}{}[section]
\newtheorem{defn}[claim]{Definition}
\newtheorem{thm}[claim]{Theorem}
\newtheorem{lemma}[claim]{Lemma}
\newtheorem{prop}[claim]{Proposition}
\newtheorem{conseq}[claim]{Consequence}
\newcommand{\tr}{\mathrm{tr}}
\newcommand{\oxi}{\overrightarrow{\xi}}
\newcommand{\oi}{\overrightarrow{i}}
\newcommand{\oik}{\overrightarrow{k}}
\newcommand{\oia}{\overrightarrow{\alpha}}
\newcommand{\alt}{\mathrm{Alt}}
\newcommand{\cA}{\mathcal{A}}
\begin{document}

%%%%%%%%%%%%%%%%%%%%%%%%%%%%%%%%%%%%%%%%%%%%%%%%%%
%%%%%%%%%%%%%%%%%%%%%%%%%%%%%%%%%%%%%%%%%%%%%%%%%%
\begin{abstract}
Motivated by the recent work on asymptotic independence relations for random matrices with non-commutative entries, we
investigate the limit distribution and independence relations for large matrices with identically distributed and Boolean independent entries.
 More precisely, we show that, under some moment conditions, such random matrices are  asymptotically $ B $-diagonal and Boolean independent from each other. The paper also gives a combinatorial condition under which such matrices are asymptotically Boolean independent from the matrix obtained by permuting the entries (thus extending a recent result in Boolean probability). 
 In particular, we show that random matrices considered are asymptotically Boolean independent from their partial transposes. 
  The main results of the paper are based on  combinatorial techniques.
\end{abstract}

\maketitle
%%%%%%%%%%%%%%%%%%%%%%%%%%%%%%%%%%%%%%%%%%%%%%%%%%
%%%%%%%%%%%%%%%%%%%%%%%%%%%%%%%%%%%%%%%%%%%%%%%%%%

\section{Introduction}

 The paper presents several results concerning the asymptotic behavior (dimensions  tends to infinity)  of random matrices with boolean independent and identically distributed entries. As described below, similar settings (random matrices with identically distributed entries) have been considered in literature in both classical (commutative) and free independence frameworks. Some of the results presented here (mostly the third section of the paper) can be seen as their Boolean analogues.

In the 1950's, E. Wigner published several influential results (see \cite{w1}, \cite{w2}), regarding Gaussian random matrices, i.e. self-adjoint random matrices
with their upper-diagonal entries forming an independent Gaussian family. In particular, under certain limiting conditions, such matrices converge in distribution to a semicircular variable. In 1991, D.-V. Voiculescu (see \cite{v1}), connected the free probability theory with random matrices; one of his results is that two Gaussian random matrices with independent entries are asymptotically free. This result was later improved (see, for example \cite{mingo-speicher-gaussian}, \cite{mp-orth}), in particular by K. Dykema in \cite{dykema}, where the asymptotic freeness is studied in a more general framework, Gaussian entries being replaced be identically distributed and independent random variables. O. Ryan, in \cite{ryan}, further generalized Dykema's results, describing the asymptotic distribution and asymptotic independence relations for an even larger class of random matrices, including also non-self-adjoint cases. Also, in \cite{ryan}, O. Ryan proves similar results for  matrices with 
non-commutative entries which are identically distributed and free independent.

  As described in \cite{schurmann}, classical and free independence are not the only `universal' independence relations: the literature also considers the Boolean independence (when symmetry is still assumed, but not unitality) and monotone independence (when both symmetry and unitality are not assumed). Boolean probability theory
 have been in the literature at least since early 1970's (see \cite{wanderfelds}) with various
developments, from stochastic differential equations to measure theory \cite{s-w}.
The topic has attracted an increasing interest in the recent years, such as the works  Popa and
Vinnikov \cite{popa-v} 
Gu and Skoufranis \cite{gu-s}, Jiao and Popa \cite{jp}, Liu \cite{liu1, liu2} and Popa and Hao \cite{popa-h}. This is the motivation for the present paper, which studies the asymptotic behavior of random matrices with independent identically distributed entries in the framework of Boolean probability.

The outline of the paper is as follows. In the next section (Preliminaries), we present some notations and preliminary results in Boolean probability.  Most relevant, we introduce the notion of \emph{$B$-diagonal} non-commutative variables, as a Boolean analogue of the $ R $-diagonal variables from free probability theory (see \cite{ns-rdiag}, \cite{ns}). 
Section 3 presents results concerning non-self-adjoint random matrices with identically distributed and Boolean independent entries. The main results are Theorem \ref{thm:4.1}, which states that the asymptotic distribution of such random matrix is a $B$-diagonal, and Theorem \ref{thm:4.2}, which states that matrices as above with Boolean independent entries are asymptotically Boolean independent. Thus, Section 3 can be seen as a Boolean analogue of the results presented by O. Ryan in \cite{ryan}. 
Section 4 presents results on the asymptotic independence between a random matrix with identically distributed Boolean independent entries and the matrix obtained by permuting the entries. Here we improve the results from \cite{popa-h}, giving a general sufficient condition that implies asymptotic Boolean independence between the two matrices from above. In particular, we show the asymptotic Boolean independence from matrix transposes (result somehow analogous to \cite{mp2}) and classes of matrix partial transposes (see also \cite{mp3}).
The last part of the paper, Section 5, presents a result concerning the asymptotic distribution of self-adjoint
random matrices with identically distributed and Boolean independent entries.

%%%%%%%%%%%%%%%%%%%%%%%%%%%%%%%%%%%%%%%%%%%%%%%%%%%
%%%%%%%%%%%%%%%%%%%%%%%%%%%%%%%%%%%%%%%%%%%%%%%%%%%

\section{Preliminaries}

%%%%%%%%%%%%%%%%%%%%%%%%%%%%%%%%%%%%%%%%%%%%%%%%%%%
%%%%%%%%%%%%%%%%%%%%%%%%%%%%%%%%%%%%%%%%%%%%%%%%%%%

\subsection{The lattice of interval partitions}\label{partition} ${}$

%%%%%%%%%%%%%%%%%%%%%%%%%%%%%%%%%%%%%%%%%%%%%%%%%%%

For a positive integer $n$ we shall denote by $ [ n ] $ the ordered set
$ \{1, 2, . . . , n\}$. By an \emph{interval partition} on $ [ n ] $  we shall understand a collection of disjoint subsets $ B_1,B_2,\dots,B_r $ of $ [ n ] $, called the blocks of $ \sigma $, such that there exist some positive integers
$ 0 = l(0) < l(1) < \dots l(r) = n $ with the property that
 $ B_t = [ l(t) ] \setminus [ l(t -1)] $
 for each $ t \in [ r ] $.
 In this case,we will denote
  $ \sigma = [ l(1),l(2),\dots,l(r-1),n]$.

  If each block of $\pi $ has exactly 2 elements, then $ \pi $ will be said to be an \emph{interval pairing}. We will denote the set of all interval partitions,
respectively pairings of $[n]$ by $\mathcal{I}(n)$, respectively $\mathcal{I}_2(n)$ (if $n$ is odd, then
$\mathcal{I}_2(n) = \varnothing$;
 if $n$ is even then $\mathcal{I}_2(n)$ has only one element, the partition of
blocks $\{(2k-1, 2k): 1 \leq k \leq n/2\}$).

 The set $ \mathcal{I}(n)$ has a lattice structure with respect to the partial order relation $`` \leq '' $ given by $ \pi \leq \sigma $ whenever each block of $ \pi $ is contained in one of the blocks of $ \sigma $.
 If
 $\sigma = [ l(1), l(2), \dots, l(r)]$
 and
 $ \omega = [ u(1), u(2), \dots, u(t) ] $
 are two elements of
 $ \mathcal{I}(n) $
 (in particular, $ l(r) = u(t) = n  $),
 then their \emph{meet}
 $ \sigma \wedge \omega = [ v(1), v(2), \dots, v(p)] $,
 respectively their \emph{join}
 $ \sigma \vee \omega = [ w(1), w(2), \dots, w(q)] $
 can be described as follows.
 Put
 $ v(0) = w(0) = 0 $,
 and, inductively,
 \begin{align*}
 v(k+1) = & \inf \{ l(s), u(s):\ l(s), u(s) > v(k) \}\\
 w(k+1) = & \inf \{ l(s):\  l(s) > v(k) \textrm{ and } l(s) = u(s^\prime)
 \textrm{ for some } s^\prime \in [ t ] \}.
 \end{align*}
  The maximal, respectively minimal elements are the partition with a single block, denoted by $\mathbb{1}_n$, respectively the partition with $ n $ singleton blocks, denoted by $ \mathbb{0}_n $.

If
$ \sigma = [ l(1), l(2), \dots, l(r) ] $
and
$ \pi = [ p(1), p(2), \dots, p(s) ] $
are elements from $ \mathcal{I}(n) $, respectively $ \mathcal{I}(m) $, we define their \emph{juxtaposition} $ \sigma \oplus \pi $ as the element of
$ \mathcal{I}(n + m ) $
given by
$ \sigma \oplus \pi = [l(1), l(2), \dots, l(r), p(1)+n, p(2)+ n, \dots, p(s) + n ] $.

   We refer to \cite{s-w}  and \cite{ns} for more details on the lattice structure of interval partitions.

%%%%%%%%%%%%%%%%%%%%%%%%%%%%%%%%%%%%%%%%%%%%%%%%%%%

\subsection{Boolean cumulants, Boolean independence and Bernoulli distributed variables}\label{Boolean:cumulants}

%%%%%%%%%%%%%%%%%%%%%%%%%%%%%%%%%%%%%%%%%%%%%%%%%%%

Throughout the paper, by a non-commutative probability space we will understand a pair
 $(\mathcal{A}, \varphi)$,
  where $ \mathcal A $ is a complex, unital algebra and
  $ \varphi:\mathcal{A} \longrightarrow \mathbb{C} $
   is a unital, $ \mathbb{C}$-linear  map. The elements of $ \mathcal{A} $ will be called (noncommutative) variables. Although we will require a $ \ast$-algebra
   structure on $ \mathcal{A} $, no positivity properties of $ \varphi $ will be used nor required.

 For $ r $ a positive integer, we define, following \cite{s-w}, the $r$-th Boolean cumulant associated to $\varphi$ as the multilinear  complex map from $ \mathcal{A}^r $  given by the recurrences:
 \begin{equation} \label{BC1}
 \varphi(a_1 a_2 \dots a_n) =
 \sum_{\pi\in \mathcal{I}(n)}
 \mathfrak{b}_\pi \big[  a_1, a_2, \dots, a_n  \big],
 \end{equation}
 where
 $\displaystyle
 \mathfrak{b}_\pi \big[  a_1, a_2, \dots, a_n  \big]
 =
 \prod_{\substack{V\in \pi\\V= \{ l+1, l+2 , \dots, l+k \} }}
 \mathfrak{b}_k(a_{l +1}, a_{l+2},\dots, a_{l+k}).
 $

\begin{defn} Let $(\mathcal{A}, \varphi)$ be a non-commutative probability space. A family of (not
necessarily unital) subalgebras $\{ \mathcal{A}_j\}_{ 1 \leq j \leq n} $ of $ \mathcal{A} $ is said to be Boolean independent
with respect to $ \varphi$ if
\[\varphi(a_1a_2\cdots a_m)=\varphi(a_1)\varphi(a_2)\cdots\varphi(a_m),\]
for any $m\geq 1$ whenever
$a_j\in \mathcal{A}_{i(j)}$
 with $ i(j) \in [n] $ and
  $i(1) \neq i(2)\neq\cdots \neq i(m)$.
\end{defn}
  A set of random variables
$\{ a_j\}_{ 1 \leq j \leq n} \in \mathcal{A}$
is said to be Boolean independent if the family of non-unital
subalgebras $ \mathcal{A}_j $, which are generated by $a_j$ is boolean independent.
 An equivalent condition (see \cite{s-w}) for  Boolean independence is
 \[
 \mathfrak{b}_m(a_1,a_2,\cdot\cdot\cdot, a_m)=0
  \]
 whenever
  $a_j \in\mathcal{A}_{i(j)}$
   such that not all $i(j)$ are equal.

   The central limit distributions corresponding to Boolean independence are the
   Bernoulli distributions, described bellow.
\begin{defn}
A self-ajoint element $ X $ of $ \mathcal{A} $ is said to be \emph{Bernoulli} distributed of mean 0 and variance $ \alpha > 0 $ with respect to $ \varphi $ if
\[
\varphi(X^n) = \left\{
\begin{array}{ll}
0 & \textrm{ if  $ n $ is odd}\\
\alpha^{ \frac{n}{2} } & \textrm{ if $ n $ is even },
\end{array}  \right.
\]
or, equivalently, if
 $ \displaystyle \mathfrak{b}_n ( X, X, \dots, X) = \delta_{n}^2 \cdot \alpha. $
\end{defn}

\subsection{ $ B $-diagonal   variables}

 Following the definition of $ R $-diagonal elements (see \cite{ns}, Definition 15.3; also \cite{ns-rdiag}), we will define their Boolean analogues. First,  suppose that
 $ X $ is a non-selfadjoint random variable in $ \mathcal{A} $,
 and that
  $ \xi_1, \xi_2, \dots, \xi_n $
  are either $ 1 $ or $ \ast$.
  A Boolean cumulant
  $ \mathfrak{b}_n (X^{\xi_1}, X^{\xi_2}, \dots, X^{\xi_n}) $
  is said to be \emph{alternating} if $ n $ is even and
  $ \xi_k \neq \xi_{ k+1} $
   for all
   $ k \in [n-1]$.
   \begin{defn} Let $ ( \mathcal{A}, \varphi) $ be a non-commutative probability space
   such that $\mathcal{A} $ is a $ \ast$-algebra.

\noindent\emph{(i)} A non-selfadjoint random variable $ X $ from  $ {A} $ is said to be \emph{$ B $-diagonal} if all its non-alternating Boolean cumulants cancel.

\noindent\emph{(ii)} If $ X  \in \mathcal{A} $ is $ B $-diagonal, the sequences
$(\alpha_n)_{n} $
and
 $ (\beta_n)_n $
 where
 \begin{align*}
 \alpha_n = &\mathfrak{b}_{2n} ( X, X^\ast, \dots, X, X^\ast )\\
 \beta_n = & \mathfrak{b}_{2n} (X^\ast, X, \dots, X^\ast, X)
 \end{align*}
 are called the \emph{determining sequences} of $ X $.
 \end{defn}

   Next, we shall present two immediate properties of $B $-diagonal variables (comparable to \cite{ns}, Proposition 15.8 and Corollary 15.11).

 \begin{prop}\label{prop:B}
 Let $ (\mathcal{A}, \varphi)$ be a non-commutative probability space such that $ \mathcal{A} $ is a $\ast$-algebra.

\noindent \emph{(i)} If $ X $ is a $ B $-diagonal variable, then $ X^\ast X $ and $XX^\ast$
 are Boolean independent.

\noindent \emph{(ii)} If $ a, b \in \mathcal{A} $ are such that
$ \varphi(a) = \varphi(a^\ast) = 0 $
and $ \{ a, a^\ast\} $, $\{ b, b^\ast\} $
are Boolean independent, then, for $ Y = ab $
and $ \xi_1, \dots, \xi_n \in \{1, \ast\} $,
\[ \mathfrak{b}_n(Y^{\xi_1}, Y^{\xi_2}, \dots, Y^{\xi_n}) =0 \]
unless $ n $ is even and $ \xi_{2j-1} = \ast $, $ \xi_{2j} = 1 $ for $ j = 1, 2, \dots, n \slash 2 $.
In particular, $Y $ is $ B $-diagonal.
 \end{prop}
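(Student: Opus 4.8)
The plan is to base both parts on the formula for Boolean cumulants whose entries are themselves products. Precisely, if a word $a_1 a_2 \cdots a_n$ is grouped into $A_1, \dots, A_m$ according to the interval partition $\sigma \in \mathcal{I}(n)$ whose blocks are the consecutive index ranges of the $A_i$, then
\[
\mathfrak{b}_m(A_1, \dots, A_m) = \sum_{\substack{\pi \in \mathcal{I}(n) \\ \pi \vee \sigma = \mathbb{1}_n}} \mathfrak{b}_\pi[a_1, \dots, a_n].
\]
This identity follows from the defining recurrence \eqref{BC1} by Möbius inversion on the lattice $\mathcal{I}(n)$, and I would record it as a short lemma. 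The only combinatorial fact I need about the indexing set is that, for interval partitions, $\pi \vee \sigma = \mathbb{1}_n$ holds if and only if for every internal boundary of $\sigma$ (the pair consisting of the last position of a block $A_i$ and the first position of $A_{i+1}$) those two consecutive positions lie in a common block of $\pi$; this is immediate from the fact that blocks are intervals.

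For part (i), by multilinearity I reduce to showing $\mathfrak{b}_m(c_1, \dots, c_m) = 0$ whenever each $c_i$ is a power $P^{k_i} = (X^\ast X)^{k_i}$ or $Q^{k_i} = (X X^\ast)^{k_i}$ and the $c_i$ are not all of the same type. Writing out the entries, a $P$-power is the alternating word $X^\ast X \cdots X^\ast X$ (beginning with $X^\ast$, ending with $X$) and a $Q$-power is $X X^\ast \cdots X X^\ast$ (beginning with $X$, ending with $X^\ast$). Applying the product formula and using that $X$ is $B$-diagonal, a surviving term requires every block of $\pi$ to be an alternating Boolean cumulant of $X$ and $X^\ast$. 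Now inspect the letters at each boundary: across a $P$-to-$Q$ junction the two straddling letters are $X, X$, and across a $Q$-to-$P$ junction they are $X^\ast, X^\ast$; in either case the block of $\pi$ forced (by the join characterization) to contain them has two equal adjacent letters and is therefore non-alternating. Since ``not all of the same type'' guarantees at least one type change, every $\pi$ in the sum has $\mathfrak{b}_\pi = 0$, so the mixed cumulant vanishes and $X^\ast X$, $XX^\ast$ are Boolean independent.

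For part (ii), I write $Y = ab$ and $Y^\ast = b^\ast a^\ast$, so each entry $Y^{\xi_i}$ expands to the two-letter word $a\,b$ (if $\xi_i = 1$) or $b^\ast a^\ast$ (if $\xi_i = \ast$), and $\sigma$ is the pairing into consecutive pairs. In any surviving term of the product formula two things must hold: every block of $\pi$ is purely of ``$a$-type'' or purely of ``$b$-type'' (by Boolean independence of $\{a,a^\ast\}$ and $\{b,b^\ast\}$), and no block is an $a$-type singleton (since $\varphi(a) = \varphi(a^\ast) = 0$). Examining the straddling letters at each internal boundary exactly as above forces the types to match there, which translates into $\xi_i \neq \xi_{i+1}$ for all $i$; hence $\xi$ must alternate. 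Looking next at the first position: if $\xi_1 = 1$ then the first letter is $a$ ($a$-type) while the second is $b$ ($b$-type), so the block containing position $1$ can only be the $a$-type singleton $\{1\}$, which kills the term; thus $\xi_1 = \ast$. Finally, if $n$ were odd then, with $\xi$ alternating from $\ast$, the last letter would be $a^\ast$ and the same singleton argument at the last position would annihilate every term; so $n$ is even. This leaves precisely $\xi_{2j-1} = \ast$, $\xi_{2j} = 1$. Since all non-alternating cumulants lie among those shown to vanish, $Y$ is $B$-diagonal.

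The routine part is the bookkeeping of which of $X, X^\ast$ (respectively $a, a^\ast, b, b^\ast$) sits at each boundary and endpoint. The step I expect to be the real crux is the product formula together with its join characterization: getting the indexing condition $\pi \vee \sigma = \mathbb{1}_n$ right is what makes the boundary-letter analysis decisive, and it is also where one must be careful, since the whole argument turns on the observation that the join condition forces a block of $\pi$ across every junction, pinning down the letters that must then coincide.
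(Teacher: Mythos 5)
Your proof is correct, and it takes a genuinely different route from the paper's. Your engine is the product formula $\mathfrak{b}_m(A_1,\dots,A_m)=\sum_{\pi\in\mathcal{I}(n),\,\pi\vee\sigma=\mathbb{1}_n}\mathfrak{b}_\pi[a_1,\dots,a_n]$, the Boolean analogue of the free-probability formula for cumulants with products as entries, together with the (correct) characterization that $\pi\vee\sigma=\mathbb{1}_n$ holds exactly when each internal boundary pair of $\sigma$ lies in a single block of $\pi$; nothing of this sort appears in the paper, which works at the level of moments throughout. For (i) the paper expands $\varphi\big((XX^\ast)^pX^\ast X^{\xi_1}\cdots X^{\xi_m}\big)$ by the recurrence (\ref{BC1}), notes that any interval partition joining positions $2p$ and $2p+1$ picks up the adjacent pair $X^\ast,X^\ast$ and dies by $B$-diagonality, so only partitions of the form $\sigma_1\oplus\sigma_2$ survive and the sum factors --- giving directly the moment factorization required by the definition of Boolean independence. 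For (ii) the paper kills the moments $\varphi\big((ab)^{\xi_1}\cdots(ab)^{\xi_n}\big)$ of every forbidden pattern using the factorization Lemma \ref{mp09} (from \cite{popa09}), and then passes from vanishing moments to vanishing cumulants via (\ref{BC1}), an induction it leaves implicit. Your route buys uniformity (one lemma, two boundary-letter analyses) and produces the cumulant statements directly: for (i) you verify the cumulant characterization of Boolean independence rather than the moment factorization, and for (ii) you avoid the moments-to-cumulants step altogether. The price is that the product formula is a genuine lemma that must be proved; your plan for it is sound (identify $\mathcal{I}(n)$ with subsets of the $n-1$ gaps, invert (\ref{BC1}) by inclusion--exclusion, and observe that the resulting indicator is exactly that no boundary gap of $\sigma$ is a cut of $\pi$), but as written it is the one step of your argument that is asserted rather than carried out. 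All the remaining bookkeeping --- the straddling letters $X,X$ and $X^\ast,X^\ast$ at type-changing junctions in (i), and in (ii) the pure-type blocks, the exclusion of $a$-type singletons, and the endpoint arguments forcing $\xi_1=\ast$ and $n$ even --- checks out.
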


 For the proof of Proposition \ref{prop:B}, we need the following Lemma, proved in \cite{popa09}.

 \begin{lemma}\label{mp09}
 Suppose that $ \mathcal{A}_1 $ and $ \mathcal{A}_2 $ are two Boolean independent subalgebras in a non-commutative probability space, that $ x \in \mathcal{A}_1$,
 $ y \in \mathcal{A}_2 $ and that $ a_1 , a_2 $ are elements in the non-unital algebra generated by $ \mathcal{A}_1 $ and $ \mathcal{A}_2 $. Then
 \[ \varphi( a_1  x  y  a_2) = \varphi(a_1  x ) \cdot  \varphi(y  a_2).\]
 \end{lemma}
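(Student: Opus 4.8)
The plan is to reduce to the case in which $a_1$ and $a_2$ are \emph{alternating words}, that is, products $z_1 z_2 \cdots z_k$ whose consecutive factors come from different subalgebras, and then to apply the definition of Boolean independence directly. Since each $\mathcal{A}_i$ is closed under multiplication, any product of elements of $\mathcal{A}_1 \cup \mathcal{A}_2$ can be rewritten, by absorbing neighbouring factors that lie in the same subalgebra into a single factor, as one alternating word; hence the non-unital algebra generated by $\mathcal{A}_1$ and $\mathcal{A}_2$ is linearly spanned by alternating words. Both sides of the asserted identity are bilinear in the pair $(a_1, a_2)$, so it suffices to prove it when $a_1$ and $a_2$ are themselves alternating words.

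Fix such $a_1$ and $a_2$ and put $P := a_1 x$ and $S := y a_2$. First I would bring $P$ and $S$ into alternating form: since $x \in \mathcal{A}_1$, absorbing the last factor of $a_1$ into $x$ when both lie in $\mathcal{A}_1$ makes $P$ an alternating word whose final factor lies in $\mathcal{A}_1$; symmetrically, because $y \in \mathcal{A}_2$, the word $S$ becomes alternating with first factor in $\mathcal{A}_2$. The decisive observation is that at the central junction no absorption can occur: as $x$ and $y$ lie in \emph{different} subalgebras, the last factor of $P$ (in $\mathcal{A}_1$) and the first factor of $S$ (in $\mathcal{A}_2$) are of different colours, so the concatenation $PS = a_1 x y a_2$ is itself an alternating word.

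It then remains to invoke Boolean independence three times. Writing $p_1, \dots, p_r$ and $s_1, \dots, s_m$ for the alternating factors of $P$ and $S$, the definition yields $\varphi(P) = \prod_{i} \varphi(p_i)$, $\varphi(S) = \prod_{j} \varphi(s_j)$, and, since $PS$ is alternating, $\varphi(PS) = \prod_{i} \varphi(p_i) \prod_{j} \varphi(s_j)$. Combining these gives $\varphi(a_1 x y a_2) = \varphi(PS) = \varphi(P)\,\varphi(S) = \varphi(a_1 x)\,\varphi(y a_2)$, which is the claim. The only genuine subtlety is the reduction to alternating words and, within it, the recognition that the hypotheses $x \in \mathcal{A}_1$ and $y \in \mathcal{A}_2$ are precisely what forbid absorption across the central junction; once $PS$ is seen to remain alternating there, the factorization is immediate. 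A little care is needed for degenerate configurations (for instance when $a_1$ already ends in $\mathcal{A}_2$, so that $x$ needs no absorption, or when $a_1$ or $a_2$ reduces to a single factor), but these cases only simplify the alternating structure.
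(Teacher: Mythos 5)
Your proof is correct, and in fact there is no internal proof in the paper to compare it against: the paper only states the lemma and defers its proof entirely to the reference \cite{popa09}. Your argument is the natural self-contained one, and every step is sound. The reduction works: since each $\mathcal{A}_i$ is a subalgebra (closed under multiplication), the non-unital algebra generated by $\mathcal{A}_1$ and $\mathcal{A}_2$ is linearly spanned by alternating words, and both sides of the identity are bilinear in $(a_1,a_2)$, so it suffices to treat alternating $a_1$, $a_2$. Then, as you observe, absorption of same-algebra neighbours makes $P=a_1x$ an alternating word ending in an $\mathcal{A}_1$-factor and $S=ya_2$ an alternating word beginning with an $\mathcal{A}_2$-factor; the hypotheses $x\in\mathcal{A}_1$, $y\in\mathcal{A}_2$ are exactly what forbid any merging at the central junction, so the concatenation $PS=a_1xya_2$ remains alternating, and three applications of the definition of Boolean independence (to $P$, to $S$, and to $PS$) give $\varphi(a_1xya_2)=\varphi(a_1x)\,\varphi(ya_2)$. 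Your handling of the degenerate cases (single-factor $a_1$ or $a_2$, or $a_1$ already ending in $\mathcal{A}_2$) is also fine, since these only shorten the words involved. What your argument buys is self-containedness: it uses nothing beyond the definition of Boolean independence and the subalgebra property needed for absorption, so it could replace the citation and make this part of the paper independent of \cite{popa09}.
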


Let us proceed now to proving Proposition \ref{prop:B}.

\begin{proof}
 For part (i), it suffices to show that
  \begin{equation}\label{p}
  \varphi \big( (X X^\ast)^p X^\ast X^{\xi_1} X^{\xi_2} \cdots X^{\xi_m}   \big) =
  \varphi\big((X X^\ast)^p \big) \cdot
 \varphi\big( X^\ast X^{\xi_1} X^{\xi_2} \cdots X^{\xi_m}   \big)
  \end{equation}
  for all positive integers $p $ and $ m $ and all
  $ \xi_1, \xi_2, \dots, \xi_m \in \{ 1, \ast\} $.
  The recurrence (\ref{BC1}) gives that
 \[ \varphi \big( (X X^\ast)^p X^\ast X^{\xi_1} X^{\xi_2} \cdots X^{\xi_m}   \big)
   =
   \sum_{ \sigma \in \mathcal{I}(n)}
  \mathfrak{b}_{\sigma} \big[\underbrace{X, X^\ast, \dots, X, X^\ast}_{2p}, X^\ast, X^{\xi_1}, \dots, X^{\xi_m}  \big]
  \]
 Let
  $ n = 2p + m + 1 $
 and
 $ \pi = [ 2p, n ] \in \mathcal{I}(n) $.
 If $ \sigma $ is an element of $ \mathcal{I}(n) $ such that
 $ \sigma \nleq \pi $
 then $ 2p $ and $ 2p + 1 $ are in the same block of $ \sigma $. The Boolean cumulant corresponding to that block cancels, since $ X $ is $ B $-diagonal, hence the term corresponding to $ \sigma $ cancels in the summation above. Thus
  \[ \varphi \big( (X X^\ast)^p X^\ast X^{\xi_1} X^{\xi_2} \cdots X^{\xi_m}   \big)
    =
    \sum_{ \substack{ \sigma \in \mathcal{I}(n)\\ \sigma \leq \pi }}
   \mathfrak{b}_{\sigma} \big[\underbrace{X, X^\ast, \dots, X, X^\ast}_{2p}, X^\ast, X^{\xi_1}, \dots, X^{\xi_m}  \big].
   \]
   But
    $ \pi = \mathbb{1}_{2p} \oplus \mathbb{1}_{m+1} $,
     hence
   $ \sigma \leq \pi $
   is equivalent to
   $ \sigma = \sigma_1 \oplus \sigma_2 $
   for some
   $ \sigma_1 \in \mathcal{I}(2p) $,  $ \sigma_2 \in \mathcal{I}(m+1) $,
   therefore
   \begin{align*}
 \varphi \big( (X X^\ast)^p &  X^\ast  X^{\xi_1} X^{\xi_2} \cdots X^{\xi_m}   \big)\\
    = &
  \sum_{ \substack{ \sigma_1 \in \mathcal{I}(2p)\\ \sigma_2 \in \mathcal{I}(m+1) }}
   \mathfrak{b}_{\sigma_1 \oplus \sigma_2} \big[\underbrace{X, X^\ast, \dots, X, X^\ast}_{2p}, X^\ast, X^{\xi_1}, \dots, X^{\xi_m}  \big]\\
    = &
\sum_{ \substack{ \sigma_1 \in \mathcal{I}(2p)\\ \sigma_2 \in \mathcal{I}(m+1) }}
( \mathfrak{b}_{\sigma_1} \big[ \underbrace{X, X^\ast, \dots, X, X^\ast}_{2p} \big]
  \cdot
  \mathfrak{b}_{\sigma_2}
  \big[ X^\ast, X^{\xi_1}, \dots, X^{\xi_m}\big]  )\\
  = & \big(
  \sum_{ \sigma_1 \in \mathcal{I}(2p) }
   \mathfrak{b}_{\sigma_1} \big[ \underbrace{X, X^\ast, \dots, X, X^\ast}_{2p} \big]
  \big)
  \cdot\big(
  \sum_{\sigma_2\in \mathcal{I}(m+1)}
  \mathfrak{b}_{\sigma_2}
  \big[ X^\ast, X^{\xi_1}, \dots, X^{\xi_m}\big]
  \big)
   \end{align*}
   and (\ref{p}) follows applying again the recurrence (\ref{BC1}).

   For part (ii), using recurrence (\ref{BC1}), it suffices to show that
   \[
   \varphi \big( (ab)^{\xi_1} (ab)^{\xi_2} \cdots (ab)^{\xi_n} \big) = 0
   \]
   unless  $ n $ is even and
    $ \xi_{2j-1} = \ast $, $ \xi_{2j} = 1 $ for
     $ \displaystyle j \in [  n \slash {2} ] $.

   If $ \xi_1  = 1 $, then Lemma \ref{mp09} gives that
   \[
  \varphi \big( (ab)^{\xi_1} (ab)^{\xi_2} \cdots (ab)^{\xi_n} \big)
  = \varphi(a)
 \varphi\big( b (ab)^{\xi_2} \cdots (ab)^{\xi_n} \big) = 0.
  \]
  Similarly, if $ \xi_n = \ast $,
  \[
  \varphi \big( (ab)^{\xi_1} (ab)^{\xi_2} \cdots (ab)^{\xi_n} \big)
  =
  \varphi \big( (ab)^{\xi_1} (ab)^{\xi_2} \cdots (ab)^{\xi_{n -1} b^\ast } \big) \cdot \varphi(a^\ast) = 0.
  \]
 If $ \xi_k = \xi_{ k + 1} =1 $,
  then
  $ (ab)^{\xi_k}(ab)^{ \xi_{k+1}} = abab $
  hence Lemma \ref{mp09} gives
  \begin{align*}
 \varphi\big( (ab)^{\xi_1} (ab)^{\xi_2} \cdots & (ab)^{\xi_n}
  \big) =
  \varphi\big(
   (ab)^{\xi_1} \cdots (ab)^{\xi_{k-1} } abab (ab)^{\xi_{k+2}} \cdots (ab)^{\xi_n}
    \big)\\
    =&
 \varphi\big(
    (ab)^{\xi_1} \cdots (ab)^{\xi_{k-1} } ab\big)
    \cdot
    \varphi \big( ab (ab)^{\xi_{k+2}} \cdots (ab)^{\xi_n}
        \big)\\
     =&
  \varphi\big(
     (ab)^{\xi_1} \cdots (ab)^{\xi_{k-1} } ab\big)
     \cdot
     \varphi ( a ) \cdot \varphi \big( b (ab)^{\xi_{k+2}} \cdots (ab)^{\xi_n}
         \big)     =0 .
  \end{align*}
  A similar argument gives that if
  $ \xi_k = \xi_{k+1} = \ast $,
  then
  \begin{align*}
 \varphi\big( (ab)^{\xi_1} (ab)^{\xi_2} \cdots & (ab)^{\xi_n}
   \big) =
   \varphi\big(
        (ab)^{\xi_1} \cdots (ab)^{\xi_{k-1} } b^\ast\big)
        \cdot
        \varphi ( a^\ast ) \cdot \varphi \big( b^\ast a^\ast (ab)^{\xi_{k+2}} \cdots (ab)^{\xi_n}
            \big),
  \end{align*}
hence the conclusion.
\end{proof}

\subsection{Random matrices with non-commutative entries}
We shall use the notation $ M_N ( \mathcal{A}) $ for the the algebra of $ N \times N $ matrices with entries from $ \mathcal{A} $,
   i.e. $ M_N( \mathcal{A} )  = M_N ( \mathbb{C} ) \otimes \mathcal{A} $. The elements of $ M_N ( \mathcal{A} ) $ will be called $ N \times N $ random matrices with entries in $ \mathcal{A} $.
  We refer to \cite{ms, ns, Tao} for further information on random matrix theory.

   The algebra $ M_N( \mathcal{A} ) $  together with the matrix adjoint and the unital positive map $ \varphi \circ \text{tr} $, has a non-commutative probability space structure. (Here $ \text{tr} $ denote the normalized matrix trace.)

\begin{defn}
Suppose that for each positive integer $ N$,
 $\{ X_N(k):\ k \in [ n ] \} $,
  is a set of $ N \times N $ random matrices with entries from $ \mathcal{A} $.

  The family
  $\{ X_N(k):\ k \in [ n ] \}_{N \geq 1} $
 is said to be asymptotically Boolean independent
 if there exists a non-commutative probability space
 $ (\mathcal{B}, \psi) $
 such that $ \mathcal{B} $ is a $ \ast$-algebra
 and there exists a Boolean independent family
 $(b_1, b_2, \dots b_n)$
  from $ \mathcal{B} $ such that
 for every non-commutative
polynomial $p$ in $2n$ variables and complex coefficients, we have that
\[
\lim_{N \rightarrow\infty}
\varphi \circ \text{tr}\big(p(B_N(1), B_{N}(1)^\ast,
\dots, B_N(n), B_N(n)^\ast\big) =   \psi\big(p(b_1, b_1^\ast, \dots, b_n, b_n^\ast) \big).
\]
\end{defn}

%%%%%%%%%%%%%%%%%%%%%%%%%%%%%%%%%%%%%%%%%%%%%%%%%%%
%%%%%%%%%%%%%%%%%%%%%%%%%%%%%%%%%%%%%%%%%%%%%%%%%%%

\section{Non-self-adjoint random matrices and $B$-diagonal variables}

%%%%%%%%%%%%%%%%%%%%%%%%%%%%%%%%%%%%%%%%%%%%%%%%%%%
%%%%%%%%%%%%%%%%%%%%%%%%%%%%%%%%%%%%%%%%%%%%%%%%%%%

\begin{defn}\label{iota:1}
 For each $ \oxi = ( \xi_1, \xi_2, \dots, \xi_n) $
 with
 $ \xi_k \in \{ 1, \ast \} $
 and
 $ \oi = (i_1, i_2, \dots, i_n) $
 with $ i_k \in [ N ] $  $( 1 \leq k \leq n ) $,
 we define the interval partition
 $$ \iota( \oxi, \oi) = \big[ l(1), l(2), \dots, l(r) \big] $$
 from $ \mathcal{I}(n) $ as follows.
 Put $ l(0) = 0 $ and $ i_{ n + 1} = i_1 $. Then, inductively,  take
 \begin{align*}
 l(k)= \sup \{ t \in [ n ] : \
  (i_s,  i_{ s+ 1} )^{ \xi_s} = (i_{s+1}, i_{s+2})^{\xi_{s+1}}\ \
  \text{for  all } l(k-1)< s \leq t
 \},
 \end{align*}
 where, for
  $i,j \in [N] $,
   we denote
   $ (i, j)^1 =( i, j) $
   and
   $(i, j)^\ast = (j, i) $.
 \end{defn}

 \begin{defn}
Suppose that
$\sigma $ is an interval partition from
   $ \mathcal{I}(n) $
   and that
$
 \overrightarrow{\xi} = (\xi_1, \xi_2, \dots, \xi_n ) $
 where each
  $ \xi_j $
  is either $ 1 $ or $ \ast $.
 A block
  $( d+1, d+2, \dots, d+ p ) $
   of
 $ \sigma $
 is said to be
 \emph{$ \overrightarrow{\xi}$-alternating}
  if
 $ \xi_{d+1} \neq \xi_{d+2}, \,..., \,\xi_{d+p-1} \neq \xi_{d+p},\;  \xi_{d+p} \neq \xi_{d+1} $.
 If all blocks of $ \sigma $ are
 $ \overrightarrow{\xi}$-alternating,
 then the partition
  $ \sigma $ is said to be
 \emph{$ \overrightarrow{\xi}$-alternating}. The set of all $ \oxi$-alternating interval partitions will be denoted by $ \alt(\oxi)$.
 \end{defn}

\begin{lemma}\label{lemma:4}
Let
$ \sigma = [ l(1), l(2), \dots, l(r) ] \in \mathcal{I}(n) $.
With the notations from above, if $ \sigma $ is $ \oxi $-alternating, we have that
\[
\#\{ \oi\in [ N ]^n : \iota( \oxi, \oi) = \sigma \}
= N^2(N-1)^{r-1}.
\]
If  $ \sigma $ is not $ \oxi $-alternating, then
\[
\#\{ \oi\in [ N ]^n : \iota( \oxi, \oi) = \sigma \}
 \leq N^r.
\]
\end{lemma}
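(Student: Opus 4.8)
The plan is to translate the condition $\iota(\oxi,\oi)=\sigma$ into an explicit system of equalities and inequalities on the entries of $\oi$ (using the cyclic convention $i_{n+1}=i_1$) and then count its solutions.

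The first step is an elementary case analysis of the defining relation $(i_s,i_{s+1})^{\xi_s}=(i_{s+1},i_{s+2})^{\xi_{s+1}}$. When $\xi_s\neq\xi_{s+1}$ it is equivalent to the single equation $i_s=i_{s+2}$, whereas when $\xi_s=\xi_{s+1}$ it is equivalent to the conjunction $i_s=i_{s+1}=i_{s+2}$. Consequently, two positions $s,s+1$ lying in a common block of $\sigma$ forces the corresponding equality, while a block boundary between $s$ and $s+1$ forces its negation.

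For the alternating case I would then solve the within-block equalities. Since every such constraint is of the first type $i_s=i_{s+2}$, within each block the indices at positions of equal parity coincide, and the last within-block relation forces the index immediately following the block to agree with the index at its start. Propagating this \emph{return to the start} across all blocks identifies every block-start index with $i_1=:a$, so that a solution is parametrized by $a$ together with one interior value $b_k$ per block. I would finish by showing that the $r-1$ boundary inequalities reduce to $b_k\neq b_{k+1}$, after which the count is immediate: $N$ choices for $a$, $N$ for $b_1$, and $N-1$ for each subsequent $b_k$, which yields $N^2(N-1)^{r-1}$.

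For the non-alternating bound I would discard the inequalities (they only decrease the count) and bound the number of solutions by $N$ raised to the number of equivalence classes cut out on the indices by the within-block equalities. The alternating analysis already shows that these classes number at most $r+1$, namely the common value $a$ together with the $r$ interior values; a non-alternating block contributes a constraint of the second type $i_s=i_{s+1}=i_{s+2}$, which identifies $a$ with that block's interior value and so removes one class, leaving at most $r$ classes and hence at most $N^r$ solutions.

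I expect the main obstacle to be the boundary-inequality analysis in the alternating case: one must check, while keeping track of the $\xi$-orientation of two adjacent blocks and of the positions of the $\ast$'s, that the negation of the defining relation at a block-end collapses exactly to the clean condition $b_k\neq b_{k+1}$ rather than to some weaker relation. The careful bookkeeping of parities at the block junctions is where the argument is most delicate.
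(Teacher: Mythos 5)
Your overall skeleton (translate $\iota(\oxi,\oi)=\sigma$ into within-block equalities plus boundary negations, solve the equalities, count parameters) is the same as the paper's, but there are two genuine gaps, one of which is yours alone and one of which you share with the paper itself.

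The gap that is yours alone is in the non-alternating case. You assume that a non-alternating block must contain a constraint of the second type, i.e.\ two consecutive equal signs $\xi_s=\xi_{s+1}$ inside the block. But a block $(d+1,\dots,d+p)$ is $\oxi$-alternating only if, in addition to the internal conditions, the cyclic condition $\xi_{d+p}\neq\xi_{d+1}$ holds; hence a block can fail to be alternating while all of its internal consecutive signs differ, namely exactly when it has odd length. For such a block your argument produces no class merging at all, and your bound stays at $N^{r+1}$. Concretely, take $n=3$, $\oxi=(1,\ast,1)$, $\sigma=\mathbb{1}_3$: no $s$ has $\xi_s=\xi_{s+1}$, yet the lemma demands the bound $N^{r}=N$, and indeed the within-block relations together with $i_4=i_1$ force $i_1=i_2=i_3$. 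The missing ingredient is a parity/wrap-around argument: in an odd internally-alternating block it is the \emph{interior} chain, not the start value, that propagates to the index following the block (your step ``the last within-block relation forces the index immediately following the block to agree with the index at its start'' uses evenness of the block, which is exactly what fails here), and the convention $i_{n+1}=i_1$ then identifies that interior class with the class of $i_1$, recovering the lost factor of $N$. This is precisely the paper's separate ``Case 2'', which your proposal omits.

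The obstacle you flag in the alternating case is real, and it is in fact fatal to the exact equality --- for you and for the paper alike. Alternation is only required within blocks, so nothing prevents the last sign of block $k$ from equalling the first sign of block $k+1$; in that case, by your own case analysis, the negation of the agreement at the boundary says only that $b_k$, $a$, $b_{k+1}$ are not all three equal, which is strictly weaker than $b_k\neq b_{k+1}$. Consequently the count is not $N^2(N-1)^{r-1}$ in general: for $n=4$, $\oxi=(1,\ast,\ast,1)$, $\sigma=[2,4]$ (which is $\oxi$-alternating), the solution set is $\{\,\oi: i_1=i_3\,\}$ minus $\{\,\oi: i_1=i_2=i_3=i_4\,\}$, of cardinality $N^3-N\neq N^2(N-1)$. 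The paper's proof silently makes the very identification you were worried about (it asserts that the boundary failure is equivalent to $i_{2p(t)}\neq i_{2p(t)+2}$ regardless of the signs straddling the boundary), so Lemma \ref{lemma:4} is itself incorrect as an exact equality. What survives, and what is all that Theorems \ref{thm:4.1}, \ref{thm:4.2} and \ref{thm:5.1} actually use, is the asymptotic statement: for $\oxi$-alternating $\sigma$ the within-block equalities alone give exactly $N^{r+1}$ tuples, and each boundary condition cuts out a set of size at most $N^{r}$, so the count is $N^{r+1}+O(N^{r})$, while non-alternating $\sigma$ contributes $O(N^{r})$. Your parametrization delivers this corrected form immediately; the clean product formula should simply be abandoned rather than proved.
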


\begin{proof}

Suppose first that $ \sigma $ is $ \oxi $-alternating. Then all the blocks of $ \sigma $ have an even number of elements, so we can put
$p(0) = 0 $
and
$ \sigma = [ 2p(1), 2p(2), \dots, 2p(r)] $
 (in particular, $n = 2p(r) $).
With this notations, we have that
$ \oi \in \iota( \oxi, \overrightarrow{j}) $
if and only if
\begin{equation}\label{eq:xi}
(i_s, i_{s +1})^{\xi_s} = ( i_{s+1}, i_{s+2})^{\xi_{s+1}}
\end{equation}
for  each
 $ k \in \{ 0, 1, \dots, r-1\} $
 and  $ s $ such that
  $ 2p(k)+1 \leq s \leq 2p(k+1) -1 $, and
  \[
  ( i_{ 2p(t)}, i_{ 2p(t) + 1}) \notin \{
  (i_{2p(t) +1}, i_{2p(t) +2}),
  (i_{2p(t) +2}, i_{2p(t) +1})
  \}
  \]
   that is
    $ i_{ 2p(t)}  \neq i_{ 2p(t) +2} $
    for $ t = 1, 2, \dots, r $.

  But each block of $ \sigma $ is $ \oxi $-alternating, so for
  $ 2p(k) + 1 \leq s \leq 2p(k+1) - 1 $
  we have that
  $ \xi_s \neq \xi_{ s + 1 } $,
  hence the equation (\ref{eq:xi}) becomes
  $
   ( i_s, i_{s+1}) = ( i_{s+2}, i_{s+1}) $
   that is
    $ i_s = i_{s+2}$.
    Therefore
     $ \oi \in \iota ( \oxi, \sigma ) $
     is equivalent to the conditions
     \begin{equation}\label{xi:2}
     \left\{
     \begin{array}{l}
     i_{2p(k) + 1} = i_{2p(k) + 3} = \dots = i_{ 2p(k+1)+1}\\
          i_{2p(k)+2}  = i_{2p(k)+4} = \dots = i_{2p(k+1)}\\
       i_{ 2p(t)}  \neq i_{ 2p(t) +2}\\
     \end{array}
     \right.
     \end{equation}
 satisfied for all $ k \in \{ 0, 1, \dots, r-1\} $. Moreover, the first equation from above gives that $ i_q = i_l $ for all $ q, l $ odd elements in $ [ n ] $, therefore, if $ \sigma $ is $ \oxi$-alternating,
 \begin{align*}
 \#\{ \oi\in [ N ]^n & :
  \iota( \oxi, \oi) = \sigma \}   \\
  = &
 \# \{ (i_1, i_{2p(1)}, i_{2p(2)}, \dots, i_{2p(r)}) \in N^{r+1} :
 \text{ $ i_{2p(s)} \neq i_{2p(s+1)} $
  for all $ s \in [ r-1] $} \}\\
   = & N^2 ( N -1)^{ r -1}.
\end{align*}

Next, suppose that
$ \sigma = [ l(1), l(2), \dots, l(r ) ] $
is not $ \oxi$-alternating. Note that,
if
$ D  =(p+1, p+2, \dots, p+q )$
is a block of
$ \iota ( \oxi, \oi) $,
then the tuple
 $(i_{p+1}, i_{p+2}, \dots, i_{p+q+1}) $
 is uniquely determined, according to Definition \ref{iota:1},
  by any of the couples
 $ (i_{p+k}, i_{p+k+1} ) $.
 In particular
 \begin{equation}\label{n2:1}
 \#\{
 (i_{p+1}, i_{p+2}, \dots, i_{p+q+1}) \in [N]^{q+1}: \oi \in \iota( \oxi, \oi)
 \}
 \leq N^2.
 \end{equation}

  Now we claim that for $ t \leq r $, there has
  \begin{equation}\label{eq:iota:1}
  \#\{ (i_1,i_2,\dots, i_{ l(t) + 1}):
   \ \iota(\oxi, \oi) = \sigma\} \leq N^{ t + 1}.
  \end{equation}
  Indeed, we shall proved it by inductive on $t$. 
 For $t = 1$, equation (\ref{eq:iota:1}) follows trivially from (\ref{n2:1}).
   Suppose (\ref{eq:iota:1}) true for $ t \leq s $.
  Then, if the tuple
  $ (i_1,i_2,\dots,i_{ l(s) + 1}) $
  is fixed
  such that
  $ \iota( \oxi, \oi ) = \sigma $,
  we have that in the pair
  $ \{ i_{ l(s) + 1}, i_{l(s) + 2}\} $
   the element
    $ i_{ l(s) + 1 } $
    is fixed, hence there are at most $ N $ such pairs with the property that
     $ \iota(\oxi, \oi) = \sigma $,
     so
     \[
    \#\{ (i_1,i_2,\dots, i_{ l(s+1) + 1}):
       \ \iota(\oxi, \oi) = \sigma\}
       \leq N \cdot
   \#\{ (i_1,i_2,\dots, i_{ l(s) + 1}):
         \ \iota(\oxi, \oi) = \sigma\}
     \]
     hence (\ref{eq:iota:1}) follows by induction.

     Without restricting the generality, we can suppose that the last block of $ \sigma $,
     $ V =  ( l(r-1) +1, l(r-1)+ 2, \dots, l(r) = n) $
     is not $ \oxi $-alternating. Then we have two possible cases.

     Case 1:
     $ \xi_s =\xi_{s+1} $
     for some
     $ s \in V \setminus \{n \} $.
     Then
      $(i_s, i_{s+1})^{\xi_s} = (i_{s+2}, i_{s+1})^{\xi_{s+1}} $
            so $ i_s = i_{s+1} $.
       Hence, as seen in the argument for (\ref{n2:1}),   it follows that
       \[ i_{l(r-1) + 1}= i_{l(r-1)+2} = \dots = i_{l(r)},\]
so, equation (\ref{eq:iota:1}) gives
\[
 \# \{ \oi \in [ N]^n: \iota( \oxi, \oi) = \sigma \}  = \# \{ (i_1, i_2, \dots, i_{l(r-1) + 1}:   \iota( \oxi, \oi) = \sigma \} \leq N^r.
 \]

 Case 2: 
    $ \xi_s  \neq \xi_{s+1} $
      for all
      $ s \in V \setminus \{n \} $
      and $ V $ has an odd number of elements. 
      
     According to equation \ref{eq:xi}, we have $( i_s, i_{s+1}) = ( i_{s+2}, i_{s+1}) $ for each
      $s\in \{l(r-1)+1,l(r-1)+2,\ldots,n-1\}$, i.e. 
      
      \begin{equation*}
     \left\{
     \begin{array}{l}
     i_{l(r-1)+1} = i_{l(r-1)+ 3} = \dots = i_{ n},\\
          i_{l(r-1)+2}  = i_{l(r-1)+4} = \dots = i_{n-1},\\
       i_{ n-1}  \neq i_{ 1}.\\
     \end{array}
     \right.
     \end{equation*}
  The last equation due to $(i_{n+1},i_n)=(i_1,i_n)$. Thus, the tuple 
  $$(i_{l(r-1)+1},i_{l(r-1)+2},\ldots,i_{n-1},i_{n})$$ is fixed by
  tuple $(i_{1},i_{2},\ldots,i_{l(r-1)+1})$.
     Utilizing again equation (\ref{n2:1}), we obtain
     \[
     \#\{ \oi\in [ N ]^n : \iota( \oxi, \oi) = \sigma \} =
     \# \{ (i_1, i_2, \dots, i_{l(r-1) + 1}):
       \iota( \oxi, \oi) = \sigma \}
        \leq N^r.
     \]
 \end{proof}

\begin{thm} \label{thm:4.1}
 For each positive integer $ N $, let $X_N=[x_{ij,N}]_{1\leq i,j\leq N}$
 be a $ N \times N $
  random matrix with  identically distributed and
Boolean independent entries from  $(\mathcal{A},\varphi)$
 such that, for each
  $(i,j)\in [N]\times[N]$:
  \begin{enumerate}
  \item[1.] the limits
  \begin{align*}
  \alpha_m &= \lim_{N \rightarrow \infty}
   N \varphi \big( (x_{ij, N} x_{ij, N}^\ast)^m\big) \\
  \beta_m &= \lim_{N \rightarrow \infty}
   N  \varphi \big( (x_{ij, N}^\ast x_{ij, N})^m\big)
     \end{align*}
  exist for each positive integer $ m $
  \item[2.] $ \displaystyle
   \lim_{N \rightarrow \infty }N^\epsilon
   \varphi\big(
   x_{ i j, N}^{\xi_1} x_{ i j, N}^{\xi_2} \cdots x_{ i j, N}^{\xi_n}
   \big)
   =0,
   $\  \  \
   for all
   $ \varepsilon < 1 $
 all
 $ n \geq 1 $
 and all $n $ -tuples
    $  ( \xi_1, \xi_2, \dots, \xi_n ) \in \{ 1, \ast \}^n $.
  \end{enumerate}
Then the asymptotic distribution of
$ X_N $ is $ B $-diagonal of determining sequences
$( \alpha_n)_n $ and $ ( \beta_n)_n $.
 \end{thm}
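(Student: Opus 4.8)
The plan is to compute, for every $ n $ and every $ \oxi = (\xi_1, \dots, \xi_n) \in \{1,\ast\}^n $, the limit of the mixed moment $ \lim_{N\to\infty}\varphi\circ\tr(X_N^{\xi_1}\cdots X_N^{\xi_n}) $ and to recognize it as the corresponding moment of a $ B $-diagonal variable with determining sequences $ (\alpha_n)_n $ and $ (\beta_n)_n $. Expanding the normalized trace and writing each matrix entry via the convention $ (i,j)^1 = (i,j) $, $ (i,j)^\ast = (j,i) $ from Definition \ref{iota:1}, I would start from
\[
\varphi\circ\tr\big(X_N^{\xi_1}\cdots X_N^{\xi_n}\big) = \frac1N\sum_{\oi\in[N]^n}\varphi\big(x_{(i_1,i_2)^{\xi_1},N}^{\xi_1}\, x_{(i_2,i_3)^{\xi_2},N}^{\xi_2}\cdots x_{(i_n,i_1)^{\xi_n},N}^{\xi_n}\big),
\]
with the cyclic convention $ i_{n+1}=i_1 $.

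The first step is to factor each inner expectation using Boolean independence. Two consecutive factors belong to the same entry-generated subalgebra precisely when $ (i_s,i_{s+1})^{\xi_s} = (i_{s+1},i_{s+2})^{\xi_{s+1}} $, which is exactly the condition defining the blocks of $ \sigma := \iota(\oxi,\oi) $. By the cumulant characterization of Boolean independence, a cumulant $ \mathfrak{b}_k $ of a run of consecutive factors vanishes unless they all lie in the same subalgebra; hence in the recurrence (\ref{BC1}) only the partitions $ \pi \le \sigma $ survive, and $ \sum_{\pi\le\sigma}\mathfrak{b}_\pi $ factorizes over the blocks of $ \sigma $, exactly as in the proof of Proposition \ref{prop:B}(i). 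Since the entries are identically distributed, each block factor depends only on the restriction of $ \oxi $ to that block, so, writing $ x $ for a generic entry, I obtain $ \varphi(\cdots) = \prod_{V\in\sigma}\varphi\big(\prod_{s\in V}x^{\xi_s}\big) $.

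The second step is the power counting. Grouping index tuples by the value of $ \sigma=\iota(\oxi,\oi) $ and using Lemma \ref{lemma:4} to count them, the moment becomes $ \frac1N\sum_{\sigma}\#\{\oi:\iota(\oxi,\oi)=\sigma\}\prod_{V\in\sigma}\varphi(\cdots) $. If $ \sigma\in\alt(\oxi) $ has $ r $ blocks, each block has even size $ 2m_V $ and its factor is $ (xx^\ast)^{m_V} $ or $ (x^\ast x)^{m_V} $, which by hypothesis 1 equals $ (\gamma_V + o(1))/N $ with $ \gamma_V\in\{\alpha_{m_V},\beta_{m_V}\} $; together with the count $ N^2(N-1)^{r-1} $ this contributes $ \prod_{V}\gamma_V $ in the limit. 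If $ \sigma\notin\alt(\oxi) $, I bound its count by $ N^r $ and each of its $ r $ block factors by $ o(N^{-\epsilon}) $ via hypothesis 2 (which applies uniformly to alternating and non-alternating blocks alike), so the term is $ o(N^{(1-\epsilon)r-1}) $; taking $ \epsilon<1 $ with $ \epsilon>1-\tfrac1n $ forces all such terms to vanish, there being finitely many $ \sigma $. This yields $ \lim_N\varphi\circ\tr(X_N^{\xi_1}\cdots X_N^{\xi_n}) = \sum_{\sigma\in\alt(\oxi)}\prod_{V\in\sigma}\gamma_V $.

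Finally I would identify this limit. Running the moment--cumulant recurrence backwards, a variable whose Boolean cumulant on a fully $ \oxi $-alternating argument equals $ \alpha_{n/2} $ (for a block starting with $ 1 $) or $ \beta_{n/2} $ (for a block starting with $ \ast $), and vanishes on every non-alternating argument, has exactly the moments computed above; such a variable is $ B $-diagonal with determining sequences $ (\alpha_n)_n,(\beta_n)_n $. The main obstacle is precisely the power counting of the second step: the entire theorem rests on the one-power gap between the exact count $ N^2(N-1)^{r-1} $ for $ \oxi $-alternating $ \sigma $ and the bound $ N^r $ for the remaining partitions, and on hypothesis 2 being strong enough — decay faster than $ N^{-\epsilon} $ for every $ \epsilon<1 $ — to absorb the worst non-alternating contributions uniformly over the finitely many $ \sigma $.
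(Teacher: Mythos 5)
Your proposal is correct and follows essentially the same route as the paper's proof: expand the normalized trace, factorize each expectation over the blocks of $\iota(\oxi,\oi)$ using Boolean independence, and then run exactly the power count of Lemma \ref{lemma:4} (the exact count $N^2(N-1)^{r-1}$ for $\oxi$-alternating partitions versus the bound $N^r$ otherwise) against hypotheses 1 and 2. The only differences are cosmetic: you justify the block factorization via vanishing of mixed Boolean cumulants rather than directly from the definition of Boolean independence, and you choose a single $\epsilon$ with $1-\tfrac{1}{n}<\epsilon<1$ where the paper distributes the power as $N^{(\#\sigma-1)/\#\sigma}$ per block.
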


\begin{proof}
 In order to simplify the writing, we shall introduce several notations. If $ \sigma $ is an interval partition from $\alt(\oxi) $ and
  $ B = ( p+1, p+2, \dots, p+q ) $
   is a block of $ \sigma $, then we say that $ B \in \sigma^+ $ if
    $ \xi_{p+1} = \ast $
     and
     $ B \in \sigma^- $
     if $ \xi_{p+1} = 1 $.
      With this notation we have that
       $X  \in \cA $
        is  $ B $-diagonal of determining sequences
         $(\alpha_n)_n $
          and $( \beta_n)_n $ if and only if
          \[ \varphi \circ \tr \big( X^{\xi_1} X^{ \xi_2} \cdots X^{ \xi_n}\big)
          = \sum_{ \sigma \in \alt (\oxi) } \prod_{ B \in \sigma^+} \beta_{ \#(B)} \cdot \prod_{ B \in \sigma^-} \alpha_{\#(B)}
          \]
          holds true for all positive integers $ n $ and all
           $ \oxi \in \{ 1, \ast\}^n $.

 Let $ \oxi = ( \xi_1, \xi_2, \dots, \xi_n ) \in \{ 1, \ast\}^n $.
 Then
 \begin{align*}
 \varphi\circ \tr \big( X^{\xi_1}_N X^{\xi_2}_N \cdots X^{\xi_n}_N \big)
  = &
  \frac{1}{N}
  \sum_{ \oi \in [ N]^n}
   \varphi \big(
    x_{i_1 i_2, N}^{(\xi_1)} x_{i_2 i_3, N}^{( \xi_2)} \cdots x_{i_n  i_1, N}^{(\xi_n)} \big)\\
    = &
     \sum_{ \sigma \in \mathcal{I}_n}
      \frac{1}{N}
 \sum_{
 \substack{ \oi \in [ N]^n \\ \iota( \oxi, \oi) = \sigma }
  }
   \varphi_{\sigma}\big[
 x_{i_1 i_2, N}^{(\xi_1)}, x_{i_2 i_3, N}^{( \xi_2)},
 \dots, x_{i_n  i_1, N }^{(\xi_n)}
      \big],
   \end{align*}
where for each $k \in [n]$ and $i,j \in [N]$
\begin{equation*}
  x_{i j, N }^{(\xi_k)}= \left\{
   \begin{array}{l l}
    x_{j  i, N } &  \text{if} \ \xi_{k} = \ast,\\
    x_{i  j, N } &  \text{if} \ \xi_{k} = 1.
   \end{array}
   \right.
 \end{equation*}

 Let
 $ \sigma \in I_n $.
 If
 $ B = ( p+1, p+2, \dots, p+q ) $
 is a block of $ \sigma $, let us denote
 \[ v_{\oxi, N}(B)
  = \varphi \big(
   x_{i j, N}^{(\xi_{p+1})} x_{i j, N}^{(\xi_{p+2})}  \cdots x_{i j, N}^{(\xi_{p+q})}  \big).
   \]
Note that
 $ v_{ \oxi, N}(B) $
  does not  depend on the choice of $ i $ and $j $, since
         $ x_{i j, N } $ are identically distributed.
Henceforth,
    \[
    \sum_{  \iota( \oxi, \oi)  = \sigma }
    \varphi_{\sigma}\big[
     x_{i_1 i_2, N}^{(\xi_1)}, x_{i_2 i_3, N}^{( \xi_2)}, \dots, x_{i_n  i_1, N}^{(\xi_n)}
          \big] =( \# \{ \oi \in [ N]^n: \ \iota( \oxi, \oi) = \sigma \} ) \prod_{ B \in \sigma} v_{ \oxi, N} ( B ).
      \]

          Moreover, condition 2. gives that
         \[
         \lim_{ N \rightarrow \infty } N^{\varepsilon} \cdot  v_{ \oxi, N, }( B) = 0
         \]
 and condition 1. gives that if $ B $ is an $ \oxi$-alternating block,
   \[
   \lim_{ N \rightarrow \infty} N \cdot v_{ \oxi, N, }( B)
   = \left\{
   \begin{array}{ll}
   \alpha_{\# B } , & \textrm{ if } B \in \sigma^-\\
   \beta_{ \# B } , & \textrm{ if } B \in \sigma^+.
   \end{array}
   \right.
   \]

Therefore, if $ \sigma $ is not $\oxi$-alternating,  Lemma \ref{lemma:4} gives that
\begin{align}
 \frac{1}{N}\sum_{  \iota( \oxi, \oi)  = \sigma }
   \varphi_{\sigma}\big[
    x_{i_1 i_2, N}^{(\xi_1)}, x_{i_2 i_3, N}^{( \xi_2)}, \dots, x_{i_n  i_1, N}^{(\xi_n)}
         \big]
         \leq N^{\# \sigma -1 }\prod_{B \in \sigma }v_{ \oxi, N} ( B ) \label{eq:blocks1}\\
         =
       \prod_{B \in \sigma } N^{ \frac{\# \sigma -1}{\# \sigma} }  \cdot v_{ \oxi, N} ( B )
       \xrightarrow[N \rightarrow\infty]{} 0.  \nonumber
\end{align}

On the other hand, if $ \sigma $ is $ \oxi$-alternating, then
Lemma \ref{lemma:4} gives that
  \begin{align}
\lim_{N \rightarrow \infty}   \frac{1}{N}\sum_{  \iota( \oxi, \oi)  = \sigma }
     \varphi_{\sigma}\big[
      x_{i_1 i_2, N}^{(\xi_1)}, x_{i_2 i_3, N}^{( \xi_2)}, \dots, x_{i_n  i_1, N}^{(\xi_n)}
           \big]
           = \lim_{N \rightarrow \infty}
            N^{\# \sigma}\prod_{B \in \sigma }v_{ \oxi, N} ( B )\label{eq:blocks}\\
           =
           \lim_{N \rightarrow\infty}
         \prod_{B \in \sigma } N  \cdot v_{ \oxi, N} ( B )
          = \prod_{ B \in \sigma^-} \alpha_{\#B}
          \cdot \prod_{B \in \sigma^+} \beta_{\#B}, \nonumber
  \end{align}
  hence the conclusion.
\end{proof}

 To simplify the writing of the proof of the next result, we introduce several notations.
  Suppose that
  $ \sigma \in \mathcal{I}(n) $
   and
 $ \oxi = (\xi_1, \dots, \xi_n) \in \{ 1, \ast\}^n $.
 If
  $D = \{ l+1, l+2, \dots, l+p \} $
  is a subset of
  $ [ n ] $,
  we denote by
  $ \oxi_{ | D }
   = ( \xi_{ l+1}, \xi_{l+2}, \dots, \xi_{ l+p}) $
   and by
   $ \sigma_{| D }$
   the interval partition in
   $ \mathcal{I}(p) $
  given as follows: $ B $ is a block of
$ \sigma_{| D }$
if and only if there exists $ B_1 $ a block of
$ \sigma $
such that $ B = B_1 \cap D $.

\begin{thm} \label{thm:4.2}

Let $ m $ be a positive integer. Suppose that for every positive integer $ N $ and for every
$ k \in [ m ] $,
$ X(k)_N = [ x(k)_{ ij, N} ]_{1 \leq i, j\leq N} $ is a
 $ N \times N $
  matrix with entries from
$\mathcal{A} $
such that
$ \{ x(k)_{i, j, N} :\ i, j \in [ N ] ,
 k \in [ m ]\} $
 form a identically distributed and Boolean independent family (i.e. different entries from the same matrix as well as entries from different matrices are identically distributed and Boolean independent)
 and each
 $ x(k)_{i j, N} $
 satisfies the conditions \emph{1.} and \emph{2.}
 from Theorem \ref{thm:4.1}.

 Under the conditions above, the family
 $ \{ X(k)_N : k \in [ m ] \} $
  is asymptotically Boolean independent.

\end{thm}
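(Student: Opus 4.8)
The plan is to verify the defining factorization of Boolean independence directly, recycling the combinatorial machinery behind Theorem \ref{thm:4.1}. By multilinearity it suffices to treat monomials, so I would fix vectors $\oik = (k_1, \dots, k_n) \in [m]^n$ and $\oxi = (\xi_1, \dots, \xi_n) \in \{1, \ast\}^n$ and compute the limit of $\varphi \circ \tr\big(X(k_1)_N^{\xi_1} X(k_2)_N^{\xi_2} \cdots X(k_n)_N^{\xi_n}\big)$. For the limiting object I would take $(\cB, \psi)$ to contain a Boolean independent family $b_1, \dots, b_m$, each $B$-diagonal with determining sequences $(\alpha_n)_n$, $(\beta_n)_n$ (such a space exists by a Boolean product construction, and each $b_k$ is the limit of $X(k)_N$ by Theorem \ref{thm:4.1}). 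Writing $\kappa = \kappa(\oik) \in \I(n)$ for the interval partition whose blocks are the maximal runs on which $\oik$ is constant, and using $\rho^+, \rho^-$ for the block labelling introduced in the proof of Theorem \ref{thm:4.1}, the moment–cumulant formula \eqref{BC1} together with Boolean independence and $B$-diagonality of the $b_k$ shows that the target value is
\[
\psi\big(b_{k_1}^{\xi_1} \cdots b_{k_n}^{\xi_n}\big) = \sum_{\substack{\rho \in \alt(\oxi)\\ \rho \leq \kappa}} \prod_{B \in \rho^-} \alpha_{\#(B)} \prod_{B \in \rho^+} \beta_{\#(B)},
\]
i.e. exactly the expression from the proof of Theorem \ref{thm:4.1}, but restricted to partitions refining $\kappa$: a Boolean cumulant of the $b_k$ vanishes unless all its arguments carry the same index (forcing each block to lie inside a block of $\kappa$) and, within a single index, is nonzero only on alternating blocks.

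Next I would expand the left-hand side over matrix indices, $\varphi \circ \tr(\cdots) = \frac1N \sum_{\oi \in [N]^n} \varphi\big(x(k_1)_{i_1 i_2, N}^{(\xi_1)} \cdots x(k_n)_{i_n i_1, N}^{(\xi_n)}\big)$. For a fixed $\oi$, two factors coincide as variables precisely when they share the same matrix index and the same entry position, so the factorization of the word into Boolean independent single-entry pieces is governed by $\rho := \iota(\oxi, \oi) \wedge \kappa$, the meet refining both the position partition $\iota(\oxi, \oi)$ and the index partition $\kappa$. Since consecutive blocks of $\rho$ lie in distinct, Boolean independent single-entry subalgebras, the definition of Boolean independence yields $\varphi(\text{word}) = \prod_{B \in \rho} v_{\oxi, N}(B)$, where $v_{\oxi, N}(B)$ are the very same single-entry moments as in Theorem \ref{thm:4.1} (independent of the matrix, by identical distribution). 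This gives
\[
\varphi \circ \tr\big(X(k_1)_N^{\xi_1} \cdots X(k_n)_N^{\xi_n}\big) = \sum_{\rho \leq \kappa} \frac1N \, \#\{\oi \in [N]^n : \iota(\oxi, \oi) \wedge \kappa = \rho\} \prod_{B \in \rho} v_{\oxi, N}(B).
\]

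Finally I would run the counting and the limit. For each $\rho \leq \kappa$ I decompose the index set according to the value $\tau = \iota(\oxi, \oi)$; the condition $\tau \wedge \kappa = \rho$ forces $\rho \leq \tau$ with $\tau$ obtained from $\rho$ by deleting a subset of the (boundedly many) $\kappa$-cuts, and each resulting set is counted by Lemma \ref{lemma:4}. If $\rho$ is $\oxi$-alternating, the term $\tau = \rho$ contributes $N^2(N-1)^{\#\rho - 1} \sim N^{\#\rho + 1}$ and, since $N \, v_{\oxi, N}(B) \to \alpha_{\#(B)}$ or $\beta_{\#(B)}$ on alternating blocks by condition 1, produces the product $\prod_{B \in \rho^-} \alpha_{\#(B)} \prod_{B \in \rho^+} \beta_{\#(B)}$ in the limit, while every $\tau > \rho$ has at most $\#\rho - 1$ blocks, hence count $O(N^{\#\rho})$ and contribution $O(1/N) \to 0$. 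If $\rho$ is not $\oxi$-alternating, then every admissible $\tau$ has count at most $N^{\#\rho}$ (by Lemma \ref{lemma:4}, using $\#\tau \leq \#\rho$ with the $N^{\#\tau + 1}$ bound only arising when $\#\tau < \#\rho$), so invoking condition 2 to write $\prod_{B \in \rho} v_{\oxi, N}(B) = o(N^{-\varepsilon \#\rho})$ for every $\varepsilon < 1$ and choosing $\varepsilon \in (1 - 1/\#\rho,\, 1)$ forces the whole term to vanish. Summing the surviving alternating $\rho \leq \kappa$ then reproduces the target value, proving asymptotic Boolean independence. The main obstacle is exactly this counting with the meet, namely controlling $\#\{\oi : \iota(\oxi, \oi) \wedge \kappa = \rho\}$ and showing that the \emph{accidental} coincidences, where entry positions happen to match across a change of matrix index, are of strictly lower order; the resolution is that each such coincidence merges $\rho$-blocks and thereby strictly reduces the number of free indices.
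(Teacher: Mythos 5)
Your proof is correct, and it runs on the same engine as the paper's: expand $\varphi\circ\tr$ over matrix indices, factor each word by Boolean independence along the meet of the position partition $\iota(\oxi,\oi)$ with the index partition $\kappa=\omega(\oik)$, and let Lemma \ref{lemma:4} together with conditions 1 and 2 of Theorem \ref{thm:4.1} decide which interval partitions survive the limit. The genuine difference is in how the limit is then extracted, and here your version is the tighter one. You group the index sum by $\rho=\iota(\oxi,\oi)\wedge\kappa$ and analyze the fiber $\{\tau:\tau\wedge\kappa=\rho\}$, showing that any $\tau$ strictly coarser than $\rho$ (entry positions matching accidentally across a change of matrix index) carries count $O(N^{\#\rho})$ against weight $O(N^{-\#\rho})$, hence contributes $O(1/N)$; only $\tau=\rho$ survives. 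The paper instead groups by $\sigma=\iota(\oxi,\oi)$ and asserts, via (\ref{eq:blocks}), that \emph{every} $\sigma$ with $\sigma\wedge\omega(\oik)\in\alt(\oxi)$ contributes $\prod_{B\in\sigma\wedge\omega(\oik)}w(\oxi,B)$; by your estimate this holds only when $\sigma\leq\omega(\oik)$, the remaining terms being $o(1)$ since then $\#\big(\sigma\wedge\omega(\oik)\big)>\#\sigma$. Relatedly, the paper's closing identification of (\ref{15}) with (\ref{16}) leans on the claim that $\tau=\oplus_{D\in\omega(\oik)}\tau_{|D}$ for all $\tau$, which is true only for $\tau\leq\omega(\oik)$: as literally written, (\ref{15}) over-counts. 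For instance, with $\oik=(1,1,2,2)$ and $\oxi=(1,\ast,\ast,1)$, both $\sigma=[2,4]$ and $\sigma=\mathbb{1}_4$ lie in the index set of (\ref{15}), so that sum evaluates to $2\alpha_1\beta_1$, while the true limit is $\alpha_1\beta_1$. The correct value of the limit is exactly (\ref{16}), i.e. your
\[
\sum_{\substack{\rho\in\alt(\oxi)\\ \rho\leq\kappa}}\ \prod_{B\in\rho^-}\alpha_{\#(B)}\prod_{B\in\rho^+}\beta_{\#(B)},
\]
and it is precisely your fiber analysis of the meet that makes this step rigorous; your formulation of the target through Boolean cumulants of the limiting variables also produces this sum directly, where the paper has to pass through the factorization (\ref{eq:13}). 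In short: same strategy and same key lemma, but your more careful bookkeeping at the one delicate counting point repairs an over-count present in the paper's own argument.
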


\begin{proof}
  If
  $ \oik =
  ( k_1, k_2, \dots, k_n ) $
   is an
   $ n $-tuple with components in $ [m ]$,
   we define the interval partition
   $ \omega(\oik)  =
    [ t(1), t(2), \dots, t(s)]
    \in \mathcal{I}(n) $
 as follows.
  Let $ t(0) = 0 $ and, inductively,
  \[
  t(s) = \max\{ v:\ k_i = k_j \text{ for all } t(s-1) + 1 \leq i, j \leq v \},
   \]
 i.e.
  $ \omega(\oik) $
  is the maximal element of
   $ \mathcal{I}(n) $
   such that the components of
    $ \oik $
    are constant on its blocks. With this notation, it suffices to show that
   for any
    $ \oik = (k_1, k_2, \dots, k_m) \in [ m ]^n $
    and any
    $ \oxi = ( \xi_1, \dots, \xi_n ) \in \{ 1, \ast\}^n $
    we have that
    \begin{align}
    \lim_{ N \rightarrow\infty} &
     \varphi \circ \tr
     \big(
     X(k_1)_N^{\xi_1} \cdot  X(k_2)_N^{\xi_2}
     \cdots
     X(k_n)_N^{\xi_n}
     \big)
      \label{eq:13}\\
      =  &
     \prod_{ \substack{ B \in \omega( \oik)\\ B = (l+1, \dots, l+p)}}
     \lim_{ N \rightarrow\infty}
          \varphi \circ \tr
          \big(
          X(k_{l+1})_N^{\xi_{l+1}} \cdot  X(k_{l+2})_N^{\xi_{l+2}}
          \cdots
          X(k_{l+p})_N^{\xi_{l+p}}
          \big).
          \nonumber
    \end{align}

On the other hand, the definition of $ \tr $ gives
 \begin{align*}
 \varphi \circ \tr
 \big(
 X(k_1)_N^{\xi_1} \cdot &  X(k_2)_N^{\xi_2}
 \cdots
 X(k_n)_N^{\xi_n}
 \big)\\
  & =  \frac{1}{N}
  \sum_{ \oi \in [N]^n }
  \varphi\big(
  x(k_1)_{i_1i_2, N}^{(\xi_1)} \cdot
   x(k_2)_{i_2i_3, N}^{(\xi_2)}
  \cdots
  x(k_n)_{i_ni_1, N}^{(\xi_n)}
  \big).
 \end{align*}
Since, for
 $ k_p \neq k_q $,
  all
   $ x(k_p)_{ij, N} $
    are   Boolean independent from all
    $ x(k_q)_{ij, N} $,
    we have that
    \begin{align*}
   \varphi\big(
   x(k_1)_{i_1i_2, N}^{(\xi_1)} \cdot
    x(k_2)_{i_2i_3, N}^{(\xi_2)} &
   \cdots
   x(k_n)_{i_ni_1, N}^{(\xi_n)}
   \big) \\
   & =
   \varphi_{ \omega( \oik)}
   \big[
 x_{i_1i_2, N}(k_1)^{(\xi_1)} ,
    \dots,
    x_{i_ni_1, N}(k_n)^{(\xi_n)}
   \big].
    \end{align*}
 Therefore, using the fact that
  $ x(k)_{i_1j_1, N} $
  and
  $ x(k)_{i_2 j_2, N} $
  are Boolean independent whenever
 $ (i_1, j_1) \neq (i_2 , j_2) $,
 with the notations from the proof of Theorem
 \ref{thm:4.1},
 we obtain that
\begin{align*}
\varphi \circ \tr
 \big( &
 X(k_1)_N^{\xi_1} \cdot   X(k_2)_N^{\xi_2}
 \cdots
 X(k_n)_N^{\xi_n}
 \big)\\
 = & \frac{1}{N}
 \sum_{ \sigma \in \mathcal{I}(n)}
  \sum_{
  \substack{ \oi \in [ N]^n \\ \iota( \oxi, \oi) = \sigma }
   }
    \varphi_{\sigma \wedge \omega( \oik)}
    \big[
  x(k_1)_{i_1i_2, N}^{(\xi_1)} ,
     \dots,
     x(k_n)_{i_ni_1, N}^{(\xi_n)}
    \big] \\
    =  & \sum_{ \sigma \in \mathcal{I}(n)}
  \frac{1}{N}
 ( \# \{ \oi \in [ N]^n: \ \iota( \oxi, \oi) = \sigma \} ) \prod_{ B \in \sigma\wedge \omega(\oik)} v_{ \oxi, N} ( B ).
\end{align*}

Remark that, from the definitions of
$ \omega( \oik) $
and
$ \iota(\oxi, \oi) $,
if $ B $ is a block of
$ \sigma \wedge \omega(\oik) $
and
$ \iota( \oxi, \oi) = \sigma $
then
$ \{ x(k_l)_{ i_l i_{l+1}}^{ (\xi_l)}:\ l \in B \} $
satisfy the conditions from Theorem \ref{thm:4.1}.
Henceforth, if
 $ \sigma \wedge \omega(\oik) $
is not $ \oxi $-alternating, equation (\ref{eq:blocks1}) gives
\[
\lim_{ N \rightarrow \infty}
 \frac{1}{N}
( \#
\{ \oi \in [ N]^n: \ \iota( \oxi, \oi) = \sigma \} )
\cdot
 \prod_{ B \in \sigma\wedge \omega(\oik)} v_{ \oxi, N} ( B )
=0,
\]

% so
% \[
% 0 =
%\lim_{ N \rightarrow \infty}
%\#
%\{ \oi : \ \iota( \oxi, \oi) = \sigma \textrm{ and } i_1 = i_q  \textrm{ for all }  q \textrm { even }\}
%\cdot
%\big[
%\prod_{ B \in \sigma\wedge \omega(\oik)} v_{ \oxi, N} ( B ) \big].
% \]

On the other hand, if
$ \sigma \wedge\omega(\oik) $
is $\oxi$-alternating, equation (\ref{eq:blocks}) gives
\begin{align*}
\lim_{N \rightarrow \infty}
\frac{1}{N}
\sum_{
  \substack{ \oi \in [ N]^n \\ \iota( \oxi, \oi) = \sigma }
   }
    \varphi_{\sigma \wedge \omega( \oik)}
    \big[
  x(k_1)_{i_1i_2, N}^{(\xi_1)} ,
     \dots,
     x(k_n)_{i_ni_1, N}^{(\xi_n)}
    \big]
    =
    \prod_{ B \in \sigma \wedge \omega(\oik)}
 w( \oxi, B)
 %   \lim_{N \rightarrow\infty}
 %    N  \cdot v_{ \oxi, N} ( B )
\end{align*}
where
\[
 w( \oxi, B)=
    \lim_{N \rightarrow\infty}
     N  \cdot v_{ \oxi, N} ( B ).
\]
Therefore the left-hand side of (\ref{eq:13}) equals
\[
\sum_{
 \substack{ \sigma \in \mathcal{I}(n)\\
 \sigma \wedge \omega(\oik) \in \alt(\oxi) } }
 \prod_{ B \in \sigma \wedge \omega(\oik)}
 w ( \oxi, B )
\]
which, since each block of
$ \omega(\oik) $
is a union of blocks of
$ \sigma \wedge \omega(\oik) $,
equals
\begin{equation}\label{15}
\sum_{
\substack{ \sigma \in \mathcal{I}(n)\\
 \sigma \wedge \omega(\oik) \in \alt(\oxi) } }
 \big[
\prod_{ D \in \omega(\oik)}
\big(
\prod_{ B \in \sigma_{| D } }
 w ( \oxi, B)
 \big)
 \big].
\end{equation}

Similarly, if
$ D = (l+1, l+2, \dots, l+ p ) $
is a block of
$ \omega( \oik ) $,
we have that
\begin{align*}
\lim_{N \rightarrow \infty}
\varphi \circ \tr
\big(
X(k_{l+1})_N^{\xi_{l+1}} \cdots X(k_{l+p})_N^{\xi_{l+p}}
\big)
=
\sum_{ \sigma \in \mathcal{I}_p }
\big(
\prod_{ B \in \sigma }
w ( \oxi_{| D }, B )
\big)
\end{align*}
hence the right-hand side of (\ref{eq:13}) equals
\begin{equation}\label{16}
\prod_{ D \in \omega( \oik) }
\big[
\sum_{ \sigma \in \alt ( \oxi_{ | D }) }
\big(
\prod_{ B \in \sigma }
w( \oxi_{ | D }, B )
\big)
\big].
\end{equation}
 But
 $ \displaystyle \tau  = \underset{ D \in \omega ( \oik)  }{\oplus }  \tau_{ | D } $
 for all
 $\tau \in \mathcal{I}_n $,
 hence
\[
\{ \sigma \in \mathcal{I}_n : \sigma \wedge \omega(\oik) \in \alt ( \oxi) \} =
\{ \sigma \in \mathcal{I}_n :
\sigma_{ | D } \in \alt ( \oxi_{ | D })
\textrm{ for all } D \in \omega( \oik )
\}
\]
 so the expression (\ref{15}) and (\ref{16}) are equal and the conclusion follows.
\end{proof}

%%%%%%%%%%%%%%%%%%%%%%%%%%%%%%%%%%%%%%%%%%%%%%%
%%%%%%%%%%%%%%%%%%%%%%%%%%%%%%%%%%%%%%%%%%%%%%%

\section{Permutations of entries and asymptotic Boolean independence}

 Denote by
  $ \mathcal{S}([ N ]^2) $
the set of all bijections on
$ [ N ] \times [ N ] $
and by $ e $ the identity element in
$ \mathcal{S}([ N ]^2) $
(i.e. $ e(i, j) = ( i, j) $ for all $ i, j \in [ N ] $).
We define the involution
$ \alpha \mapsto \alpha^\ast $
on
$ \mathcal{S}([ N ]^2) $
via
$ \alpha^\ast ( i, j) = \alpha ( j, i ) $
for all $ i, j \in [ N ] $.

\begin{defn}\label{defn:5.1}
  Let $ n $ be a positive integer. Suppose that
 $ \oia = ( \alpha_1, \alpha_2, \dots, \alpha_n) $,
 that
 $ \oxi = ( \xi_1, \xi_2, \dots, \xi_n) $,
 and that
 $ \oi = (i_1, i_2, \dots, i_n) $
 where, for each $ k $, we have that
  $ \alpha_k \in \mathcal{S}( [ N ]^2 ) $,
 $ \xi_k \in \{ 1, \ast \} $
 and $ i_k \in [ N ]$.

\noindent \emph{(i)} We will denote by
 $ \alt ( \oia, \oxi ) $
  the set of
   $ \sigma \in \mathcal{I}(n) $,
such that
 $ \sigma $ is
  $ \oxi $-alternating
   and
   $ \alpha_k = \alpha_l $
   whenever $ k $ and $ l $ are in the same block of $ \sigma $.

 \noindent\emph{(ii)} We will denote by
 $ \iota ( \oia, \oxi, \oi ) $
 the interval partition
 $     [ l(1), l(2), \dots, l(r)] $
from
 $ \mathcal{I}(n) $
 defined  as follows.
 Put
 $ l(0) = 0 $
 and
 $ i_{ n +1} = i_1 $.
 Then, inductively, take
 $
 l(k) = \sup\{ t \in [ n ] :\
  \alpha_s^{ \xi_s } ( i_s, i_{s+1})
   =
  \alpha_{s+1}^{ \xi_{s+1} }
  ( i_{s+1}, i_{s+2})
  \text{ for  all } l(k-1)< s \leq t
    \}.
 $
\end{defn}

\begin{lemma}
Suppose that for each $ N $,
$ \alpha_N $
 is a permutation from
 $\mathcal{S} ( [ N ]^2 ) $,
 and that for all $ \theta < 2 $
 \begin{equation}\label{theta}
 \lim_{ N \rightarrow \infty }
 \frac{1}{ N^{ \theta } }
  \cdot
 \# \big\{
  (i, j , k ) \in [ N ]^3 : \
 \alpha_N (i, j) \in \{ (j, k ), (k, i)\}
  \big\} =0.
 \end{equation}
 \noindent
Suppose
that
$ \sigma  $
 is an element of
 $ \mathcal{I}(n) $
and that
$ \oxi = ( \xi_1, \xi_2, \dots, \xi_n ) $
with
$ \xi_j \in \{ 1, \ast \} $.
Moreover, suppose that for each positive integer $ N $,
$ \oia (N) = ( \alpha_1(N), \alpha_2(N), \dots, \alpha_n(N) ) $
is such that, for each $ j $, either
$ \alpha_j(N)  = e $ for all $ N $,
or $ \alpha_j(N ) = \alpha_N $
for all $ N $.

With these notations, we have that
\begin{enumerate}
\item[(i)] If
$ \sigma \in \alt ( \oia, \oxi) $,
then
$
\# \{\oi \in [ N ]^n:\
\iota( \oia, \oxi, \oi) = \sigma
\} = N^{ r +1 }.
$
\item[(ii)] If
$ \sigma \notin \alt ( \oia, \oxi) $,
then, for all
 $  \theta \in (1, 2) $
\end{enumerate}
\[
\lim_{ N \rightarrow \infty}
\frac{1}{ N^{ \theta + r - 1 }} \cdot
\# \{\oi \in [ N ]^n:\
\iota( \oia, \oxi, \oi) = \sigma
\} = 0
\]
\end{lemma}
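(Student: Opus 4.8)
The plan is to handle both parts through the same ``chaining'' mechanism used for Lemma \ref{lemma:4}, the only new feature being that the maps $(i_s,i_{s+1})\mapsto\alpha_s^{\xi_s}(i_s,i_{s+1})$ are now bijections of $[N]^2$ instead of the identity. First I would record the basic step: since each $\alpha_{s+1}^{\xi_{s+1}}$ is a bijection, for a fixed pair $(i_s,i_{s+1})$ the matching equation $\alpha_s^{\xi_s}(i_s,i_{s+1})=\alpha_{s+1}^{\xi_{s+1}}(i_{s+1},i_{s+2})$ has at most one solution $i_{s+2}$, and it is solvable only when the first coordinate of $(\alpha_{s+1}^{\xi_{s+1}})^{-1}\!\big(\alpha_s^{\xi_s}(i_s,i_{s+1})\big)$ equals the already fixed $i_{s+1}$. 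Reading a block of $\sigma$ from left to right, its initial pair therefore determines every subsequent index in the block; processing the $r$ blocks in order and accounting for the index shared by consecutive blocks, at most $r+1$ of the $i_k$ are free, subject to the cyclic closure $i_{n+1}=i_1$. This already gives the a priori bound $\#\{\oi\in[N]^n:\iota(\oia,\oxi,\oi)=\sigma\}\le N^{r+1}$ for every $\sigma$.

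For part (i) I would then use the two conditions defining $\alt(\oia,\oxi)$. On each block every step has $\xi_s\neq\xi_{s+1}$ and $\alpha_s=\alpha_{s+1}=:\alpha$, so the matching equation reads $\alpha(i_s,i_{s+1})=\alpha(i_{s+2},i_{s+1})$ (or its transpose), and bijectivity of $\alpha$ forces $i_{s+2}=i_s$, exactly as in the $\alpha=e$ computation of Lemma \ref{lemma:4}. Hence inside each block the odd- and even-indexed entries are separately constant, and since the blocks have even length the last index of a block coincides with its first; iterating across blocks shows that all block-initial indices equal $i_1$, so that the closure $i_{n+1}=i_1$ holds automatically. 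The remaining freedom is precisely $i_1$ together with one ``even'' index per block, i.e.\ $r+1$ indices each ranging over all of $[N]$, which yields the count $N^{r+1}$.

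For part (ii) the strategy is to locate the first place where $\sigma$ violates the $\alt$-conditions and show that it costs a power of $N$. If some block is not $\oxi$-alternating there is a step with $\xi_s=\xi_{s+1}$: when $\alpha_s=\alpha_{s+1}$ bijectivity forces $i_s=i_{s+1}$, an extra equality among otherwise free indices that lowers the bound to $O(N^{r})$; when $\{\alpha_s,\alpha_{s+1}\}=\{e,\alpha_N\}$ the equation becomes exactly $\alpha_N(a,b)\in\{b\}\times[N]$ or $\alpha_N(a,b)\in[N]\times\{a\}$, which is one of the two configurations enumerated in (\ref{theta}); the two indices governed by that step are thus confined to a set of size $o(N^{\theta})$ instead of $N^{2}$, turning $N^{r+1}$ into $o(N^{\theta+r-1})$. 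If instead every block is $\oxi$-alternating but some block carries both $e$ and $\alpha_N$, the rigidity $i_{s+2}=i_s$ breaks at the $\alpha$-transition, and I would propagate this defect forward to show that the closure $i_{n+1}=i_1$ is no longer automatic (equivalently, that the transition again forces an (\ref{theta})-type coincidence), again costing the required factor. In all cases one concludes $\#\{\oi:\iota(\oia,\oxi,\oi)=\sigma\}=o(N^{\theta+r-1})$.

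The step I expect to be the main obstacle is the case analysis in (ii), and in particular the alternating-with-$\alpha$-transition subcase. There the compatibility condition produced directly by the matching is of ``diagonal'' type (a coordinate of $\alpha_N(a,b)$ equal to the corresponding argument), which is \emph{not} one of the two configurations weighted by (\ref{theta}). Extracting the required saving then seems to demand either carefully following how the broken rigidity propagates through the later blocks until it makes the cyclic closure $i_{n+1}=i_1$ a genuine constraint, or re-expressing the diagonal coincidence through $\alpha_N^{-1}$ together with a neighbouring step so that it is once more controlled by (\ref{theta}). Organizing this bookkeeping so that the $o(N^{\theta})$ saving is counted exactly once, even when several defects occur simultaneously, is where the real work lies.
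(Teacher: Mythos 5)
Your part (i), your a priori bound $N^{r+1}$, and the first two subcases of your part (ii) are correct and are essentially the paper's own proof: the in-block rigidity $i_{s+2}=i_s$ coming from bijectivity, one new free index per block, the reduction to Lemma \ref{lemma:4} when the components of $\oia$ are constant on the blocks of $\sigma$, and the application of (\ref{theta}) at a transition with $\xi_s=\xi_{s+1}$. (Two minor remarks: the exact equality in (i) ignores the maximality constraints at block boundaries, but so does the statement itself --- the analogous count in Lemma \ref{lemma:4} is $N^2(N-1)^{r-1}$; and a block can fail to be $\oxi$-alternating without containing any step $\xi_s=\xi_{s+1}$, namely an odd block whose interior alternates, but that case is again handled by the reduction to Lemma \ref{lemma:4}.)

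The subcase you flag as the main obstacle is a genuine gap, and no amount of bookkeeping can close it: with the hypothesis as written, part (ii) is false exactly there. The paper's own proof is wrong at the same point: having rewritten the matching as $(i_s,i_{s+1})^{\xi_s}=\alpha_s^{-1}\circ\alpha_{s+1}^{\xi_{s+1}}\big((i_{s+1},i_{s+2})\big)$, it simply asserts that (\ref{theta}) bounds $\#\{(i_s,i_{s+1}):\ \iota(\oia,\oxi,\oi)=\sigma\}$; but (\ref{theta}) counts only those coincidences in which the repeated index occupies the second input and first output slot of $\alpha_N$, or the first input and second output slot --- precisely the $\xi_s=\xi_{s+1}$ transitions. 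When $\xi_s\neq\xi_{s+1}$ the matching produces the diagonal coincidences $\alpha_N(a,b)\in\{a\}\times[N]$ or $\alpha_N(a,b)\in[N]\times\{b\}$, which (\ref{theta}) does not see, exactly as you observed. Concretely, let $\alpha_N$ fix every off-diagonal pair and act by $\alpha_N(i,i)=(i+1,i+1)$ (mod $N$) on the diagonal. Then the set in (\ref{theta}) is empty, so the hypothesis holds for every $\theta$; yet for $n=2$, $\oxi=(1,\ast)$, $\oia=(e,\alpha_N)$ and $\sigma=\mathbb{1}_2\notin\alt(\oia,\oxi)$ (so $r=1$), the condition $\iota(\oia,\oxi,\oi)=\sigma$ reads $(i_1,i_2)=\alpha_N(i_1,i_2)$, hence
\[
\#\{\oi\in[N]^2:\ \iota(\oia,\oxi,\oi)=\sigma\}=\#\mathrm{Fix}(\alpha_N)=N^2-N,
\]
which is not $o(N^{\theta})$ for any $\theta<2$. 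This also shows that both of your proposed repairs must fail: there is nothing to propagate to (the cyclic closure is already built into the single matching), and (\ref{theta}) is vacuous here. The lemma can only be saved by strengthening (\ref{theta}) so that it also counts the diagonal configurations $\alpha_N(i,j)\in\{(i,k),(k,j)\}$; tellingly, that enlarged set is the one the paper actually computes when it verifies the partial-transpose Consequence in Section 4. So your diagnosis of where the real difficulty lies was exactly right --- the obstruction sits in the statement and in the paper's own proof, not in your write-up.
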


\begin{proof}

Suppose first that
$ \sigma \in \alt ( \oia, \oxi) $.
In particular, all blocks of $ \sigma $ have an even number of elements. Also, the condition
$ \iota( \oia, \oxi, \oi) = \sigma $
is equivalent to
\[
\alpha_s^{ \xi_s } ( i_s, i_{s+1})
   =
 \alpha_{s+1}^{ \xi_{s+1} }
  ( i_{s+1}, i_{s+2})
\]
whenever $ s $ and $ s+1 $ are in the same block of $ \sigma $. Then
 $ \alpha_s = \alpha_{ s + 1 } $
 and
 $ \xi_s \neq \xi_{ s + 1 } $
 so the equation above reads
 \[
  \alpha_s ( i_s, i_{s + 1 }) =
  \alpha_{s+1} ( i_{s+2}, i_{s+1}),
 \]
  which, since all
 $ \alpha_s $
  are bijections,
 means that
 $ i_{s } = i_{s+2} $.
 Hence, part (i) follows from the argument in the proof of Lemma \ref{lemma:4}.

 If
 $ \sigma \notin \alt ( \oia, \oxi, \oi ) $
 but the components of $ \oia $ are constant on the blocks of $ \sigma $, then
 $ \sigma \not \in \alt ( \oxi ) $
 and the argument from the proof of Lemma \ref{lemma:4} gives that
 \[
 \# \{ \oi \in [ N ]^n: \ \iota( \oia, \oxi, \oi) = \sigma \} \leq N^r.
 \]

Suppose that
$ \sigma \notin \alt( \oia, \oxi) $
and there is a block of $ \sigma $ on which the components of $ \oia $ are not constant.
Since we can permute circularly the blocks of $\sigma $, we can suppose that the block with this property is the one containing 1.
Denote
$ \sigma = [ l(1), l(2), \dots, l(r)] $
and suppose that
$ \alpha_s \neq \alpha_{s+1} $
for some
$ 1 \leq s \leq l(2)-1 $. Then
\[
\alpha_s^{ \xi_s } ( i_s, i_{s+1})
   =
 \alpha_{s+1}^{ \xi_{s+1} }
  ( i_{s+1}, i_{s+2})
\]
 gives that
 \[
 (i_s, i_{s+1})^{ \xi_s }
 = \alpha_s^{ -1} \circ \alpha^{\xi_{s+1}}_{s+1}
 \big( (i_{s+1}, i_{s+2})  \big)
 \]
so condition (\ref{theta}) gives that,
for all
$ \theta < 2 $
\[
\lim_{ N \rightarrow \infty}
 \frac{1}{N^\theta} \cdot
 \#\{ (i_s, i_{s+1}):\
  \iota( \oia, \oxi, \oi) = \sigma\}   = 0.
\]
  But Definition \ref{defn:5.1}(ii) gives that the pair
   $ (i_{ s}, i_{s+1} )$
  uniquely determines the tuple
   $ ( i_{1}, i_{  2 },  \dots, i_{l(1) + 1}) $,\
   hence
\begin{equation}\label{theta:2}
\lim_{ N \rightarrow \infty}
 \frac{1}{N^\theta} \cdot
 \#\{ (i_1, i_2, \dots, , i_{ l(1) + 1 }):\
  \iota( \oia, \oxi, \oi) = \sigma\}   = 0.
\end{equation}
On the other hand,   if the tuple
  $ (i_1,i_2,\dots,i_{ l(s) + 1}) $
  is fixed  such that
  $ \iota( \oia,  \oxi, \oi ) = \sigma $,
  we have that in the pair
  $ (i_{ l(s) + 1}, i_{l(s) + 2} )$
   the element
    $ i_{ l(s) + 1 } $
    is fixed, hence there are at most $ N $ values of
     $ i_{l(s) + 2 } $
 such that
     $ \iota(\oia, \oxi, \oi) = \sigma $.
Again, Definition \ref{defn:5.1}(ii) gives that the pair  $ (i_{ l(s) + 1}, i_{l(s) + 2} )$
 uniquely determines the tuple
  $ ( i_{l(s) + 1}, i_{ l(s) + 2 },
  \dots, i_{l(s+1) + 1}) $., henceforth
\begin{align}
    \#\{ (i_1,i_2,\dots, i_{ l(s+1) + 1}):
       \  & \iota( \oia, \oxi, \oi)  = \sigma\} \label{ineq:5}\\
       &  \leq N \cdot
   \#\{ (i_1,i_2,\dots, i_{ l(s) + 1}):
         \ \iota( \oia, \oxi, \oi ) = \sigma\}.
         \nonumber
\end{align}
Finally, equations (\ref{theta:2}) and (\ref{ineq:5}) give that, for all
$ \theta < 2 $,
\[
\lim_{ N \rightarrow \infty}
\frac{1}{ N^{ \theta + r - 1 }} \cdot
\# \{\oi \in [ N ]^n:\
\iota( \oia, \oxi, \oi) = \sigma
\} = 0
\]
hence the conclusion.

\end{proof}

 Let
 $ \alpha: [ N ] \times [ N ] \rightarrow [ N ] \times [ N ] $
be a bijection. For
 $ A  = [ a_{ i j}]_{ i,j =1}^N$ a
$ N \times N $
matrix with entries from the
$ \ast $-algebra $ \mathcal{A} $,
we will denote
$ A^{\lceil \alpha \rceil } =
[ a_{ \alpha(i, j )}]_{ i, j =1}^N $,
i.e. the $ (i, j) $-entry of
$A^{\lceil \alpha \rceil } $
equals the $ \alpha(i,j) $-entry
of $ A $.
 With this notation, we have the following result.

\begin{thm}\label{thm:5.1}
 For each positive integer $ N $, let $X_N=[x_{ij,N}]_{1\leq i,j\leq N}$
 be a $ N \times N $
  random matrix with identically distributed and
Boolean independent entries from  $(\mathcal{A},\varphi)$
 such that, for each
  $(i,j)\in [N]\times[N]$:
  \begin{enumerate}
  \item[1.] the limits
  \begin{align*}
  \alpha_m &= \lim_{N \rightarrow \infty}
   N \varphi \big( (x_{ij, N} x_{ij, N}^\ast)^m\big) \\
  \beta_m &= \lim_{N \rightarrow \infty}
   N  \varphi \big( (x_{ij, N}^\ast x_{ij, N})^m\big)
     \end{align*}
  exist for each positive integer $ m $
  \item[2.] $ \displaystyle
   \lim_{N \rightarrow \infty }N^\epsilon
   \varphi\big(
   x_{ i j. N}^{\xi_1} x_{ i j. N}^{\xi_2} \cdots x_{ i j. N}^{\xi_n}
   \big)
   =0,
   $\  \  \
   for all
   $ \varepsilon < 1 $
 all
 $ n \geq 1 $
 and all $n $ -tuples
    $  ( \xi_1, \xi_2, \dots, \xi_n ) \in \{ 1, \ast \}^n $.
  \end{enumerate}
  Also, suppose that for each
  positive integer $ N $,
  $ \alpha( N ) $
   is a permutations from $ {S}( [ N ]^2) $
    such that, for all
   $\theta > 2 $,
   \[
  \lim_{ N \rightarrow \infty }
   \frac{1}{ N^{ \theta } }
    \cdot
   \# \big\{
    (i, j , k ) \in [ N ]^3 : \
   \alpha_N (i, j) \in \{ (j, k ), (k, i)\}
    \big\} =0.
   \]
   Then the matrices $ X_N $ and $ X^{\lceil \alpha_N \rceil }_N $ are asymptotically
   (as $ N \rightarrow \infty $)
    Boolean independent $ B $-diagonals of determining sequences
    $( \alpha_n)_n $ and $ ( \beta_n)_n $.
\end{thm}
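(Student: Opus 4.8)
The plan is to run the argument of Theorem \ref{thm:4.2}, with the two (no longer independent) matrices $X(1)_N:=X_N$ and $X(2)_N:=X_N^{\lceil\alpha_N\rceil}$ in place of the $m$ independent families, and with the partition $\iota(\oia,\oxi,\oi)$ of Definition \ref{defn:5.1} in place of $\iota(\oxi,\oi)$. Since the entries of $X_N^{\lceil\alpha_N\rceil}$ are merely a relabelling of those of $X_N$, they are again identically distributed, Boolean independent, and satisfy conditions 1 and 2; hence Theorem \ref{thm:4.1} already gives that each of $X_N$ and $X_N^{\lceil\alpha_N\rceil}$ is asymptotically $B$-diagonal of determining sequences $(\alpha_n)_n$, $(\beta_n)_n$. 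It remains to prove the asymptotic Boolean independence, i.e. that for every $\oik\in\{1,2\}^n$ and every $\oxi\in\{1,\ast\}^n$ the mixed limit $\lim_N\varphi\circ\tr\big(X(k_1)_N^{\xi_1}\cdots X(k_n)_N^{\xi_n}\big)$ factorizes over the blocks of $\omega(\oik)$, exactly as in (\ref{eq:13}). Put $\alpha_s=e$ when $k_s=1$ and $\alpha_s=\alpha_N$ when $k_s=2$, so that $\oia=(\alpha_1,\dots,\alpha_n)$ has each component equal to $e$ or to $\alpha_N$, as required by the Lemma.

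Expanding the trace, the $s$-th factor of the word is the entry of $X_N$ indexed by $\alpha_s^{\xi_s}(i_s,i_{s+1})$ (conjugated if $\xi_s=\ast$), and by construction of $\iota(\oia,\oxi,\oi)$ two consecutive factors coincide precisely when they share a block, while consecutive blocks carry distinct entries. The key Boolean feature is that $\varphi$ of a product factorizes over such a maximal-run decomposition as soon as consecutive runs are Boolean independent, so that $\varphi(uvw)=\varphi(u)\varphi(v)\varphi(w)$ whenever $v$ is independent of $u$ and of $w$, even if $u$ and $w$ involve the same entry. Using this together with the identical distribution of the entries, I would rewrite the left-hand side as
\[
\varphi\circ\tr\big(X(k_1)_N^{\xi_1}\cdots X(k_n)_N^{\xi_n}\big)=\sum_{\sigma\in\mathcal{I}(n)}\frac1N\,\#\{\oi\in[N]^n:\iota(\oia,\oxi,\oi)=\sigma\}\prod_{B\in\sigma}v_{\oxi,N}(B),
\]
with $v_{\oxi,N}(B)$ and $w(\oxi,B)=\lim_N N\,v_{\oxi,N}(B)$ as in the proofs of Theorems \ref{thm:4.1} and \ref{thm:4.2}.

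Now I would feed in the Lemma. For $\sigma\notin\alt(\oia,\oxi)$ its part (ii) bounds the index count by $o(N^{\theta+r-1})$ for each $\theta\in(1,2)$; fixing such a $\theta$ and using that condition 2 forces $v_{\oxi,N}(B)=o(N^{-1+\delta})$ for every $\delta>0$ (as in (\ref{eq:blocks1})), the corresponding term is $o(N^{\theta-2+r\delta})$, which tends to $0$ once $\delta$ is small. For $\sigma\in\alt(\oia,\oxi)$, part (i) gives count $N^{r+1}$, whence, exactly as in (\ref{eq:blocks}), the term tends to $\prod_{B\in\sigma}w(\oxi,B)$. Therefore
\[
\lim_{N\to\infty}\varphi\circ\tr\big(X(k_1)_N^{\xi_1}\cdots X(k_n)_N^{\xi_n}\big)=\sum_{\sigma\in\alt(\oia,\oxi)}\prod_{B\in\sigma}w(\oxi,B).
\]

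Finally I would carry out the combinatorial identification, which is slightly cleaner than in Theorem \ref{thm:4.2}. As $\alpha_N\neq e$, the requirement that $\oia$ be constant on the blocks of $\sigma$ (built into the definition of $\alt(\oia,\oxi)$) is exactly $\sigma\le\omega(\oik)$, so each such $\sigma$ decomposes as $\sigma=\bigoplus_{D\in\omega(\oik)}\sigma_{|D}$ and the alternation condition becomes $\sigma_{|D}\in\alt(\oxi_{|D})$ for every $D$. Since $w(\oxi,B)=w(\oxi_{|D},B)$ whenever $B\subseteq D$, the sum factorizes,
\[
\sum_{\sigma\in\alt(\oia,\oxi)}\prod_{B\in\sigma}w(\oxi,B)=\prod_{D\in\omega(\oik)}\Big(\sum_{\tau\in\alt(\oxi_{|D})}\prod_{B\in\tau}w(\oxi_{|D},B)\Big),
\]
and by Theorem \ref{thm:4.1} applied to the single matrix indexing each block $D$ (either $X_N$ or $X_N^{\lceil\alpha_N\rceil}$, both with determining sequences $(\alpha_n)_n$, $(\beta_n)_n$) each inner factor equals $\lim_N\varphi\circ\tr$ of the corresponding block subword. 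This is the desired factorization. I expect the only genuinely new difficulty to be the Lemma's estimate for those $\sigma$ whose blocks are not $\alpha$-constant: there the counting is no longer governed by the mechanism of Lemma \ref{lemma:4} but by the collision hypothesis (\ref{theta}), which is precisely what excludes the identity permutation and singles out the permutations for which the overlap between $X_N$ and its scrambling is asymptotically negligible.
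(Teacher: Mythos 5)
Your proposal is correct and follows essentially the same route as the paper's own proof: Theorem \ref{thm:4.1} for the $B$-diagonality of both matrices, expansion of the mixed trace over the partitions $\iota(\oia,\oxi,\oi)$ of Definition \ref{defn:5.1}, the two counting estimates of the Section 4 Lemma combined with conditions 1 and 2 to kill the terms with $\sigma\notin\alt(\oia,\oxi)$ and to evaluate the alternating ones, and the Theorem \ref{thm:4.2}-style factorization over $\omega(\oik)$ at the end. The only deviations are cosmetic: the paper takes the explicit moment formula (\ref{mix:alpha}) as the target and leaves the final factorization over $\omega(\oik)$ implicit in the phrase ``it suffices,'' whereas you spell it out; and your uniform exponent bookkeeping (every block contributing $o(N^{-1+\delta})$, hence $o(N^{\theta-2+r\delta})$ per non-alternating term) is in fact cleaner than the paper's splitting of $\sigma$ into $\oxi$-alternating and non-alternating blocks with the exponent $\varepsilon(\sigma)$, and avoids the degenerate case where all blocks are $\oxi$-alternating but $\oia$ is not constant on some block.
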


\begin{proof}
The fact that the asymptotic distributions of the matrices $ X_N $ and
$ X_N^{ \lceil \alpha ( N ) \rceil } $
are both $ B $-diagonal of determining sequences
$( \alpha_n)_n $ and $ ( \beta_n)_n $
is proved in Theorem \ref{thm:4.1}. It only remains to show the asymptotic Boolean independence.

 To simplify the notations, we will omit the index
 $ N $, i.e. we shall write
  $ X $,
 respectively
 $ X^{ \lceil \alpha \rceil } $
 for $ X_N $,
 respectively
 $ X^{ \lceil \alpha ( N ) \rceil }_N $
   with the convention that only matrices of the same size are multiplied. Also, we shall use the notations
   $ X^{\alpha, 1} $,
   respectively
   $X^{ \alpha,  \ast }$
    for
    $ X^{ \lceil \alpha \rceil} $,
    respectively
$ \big( X^{ \lceil \alpha \rceil}   \big)^\ast $.

 For a positive integer $ n $, let
 $ \oia = ( \alpha_1, \alpha_2, \dots, \alpha_n) $
 and
 $ \oxi = ( \xi_1, \xi_2, \dots, \xi_n) $
 where, for each $ j $, we have that
  $ \alpha_j \in \{ 1, \alpha \} $
  and
  $ \xi_j \in \{ 1, \ast \} $.
 It suffices to show that
 \begin{equation}\label{mix:alpha}
 \lim_{ N \rightarrow \infty }
 \varphi \circ \tr
 \big(
 X^{\alpha_1, \xi_1} \cdot
   X^{ \alpha_2, \xi_2 }
   \cdots
   X^{ \alpha_n, \xi_n }
 \big)
 =
 \sum_{ \sigma \in \alt ( \oia, \oxi ) } \
 \big[
 \prod_{ B \in \sigma^-} \alpha_{\#B}
   \cdot
   \prod_{B \in \sigma^+} \beta_{\#B}
   \big].
 \end{equation}

 Using the Boolean independence of the entries of $ X $, we obtain
 \begin{align*}
 \varphi \circ \tr
  \big(
  X^{\alpha_1, \xi_1} \cdot
    X^{ \alpha_2, \xi_2 }
    \cdots
    X^{ \alpha_n, \xi_n }
  \big)
  = \sum_{ \oi \in [ N ]^ n }
  \frac{1}{N}
  \varphi
  \big(
  x_{ \alpha_1( i_1, i_2)}^{( \xi_1 )}
  x_{ \alpha_2( i_2, i_3)}^{( \xi_2 )}
  \cdots
 x_{ \alpha_n( i_n, i_1)}^{( \xi_n )}
  \big)\\
  =\sum_{ \sigma \in \mathcal{I}(n)}
  \frac{1}{N}
  \sum_{
    \substack{ \oi \in [ N]^n \\
     \iota( \oia, \oxi, \oi) = \sigma } }
  \varphi_{\sigma}
   \big[
   x_{ \alpha_1( i_1, i_2)}^{( \xi_1 )},
   x_{ \alpha_2( i_2, i_3)}^{( \xi_2 )},
   \cdots,
  x_{ \alpha_n( i_n, i_1)}^{( \xi_n )}
   \big].
 \end{align*}

 Let
 $ B = (l+1, l+2, \dots, l+p ) $
 be a block of $ \sigma $.
 Then, if
 $ \iota ( \oia, \oxi, \oi ) = \sigma $,
  we have that
  $
   x_{
   \alpha_{l + s}( i_{ l + s}, i_{ l + s + 1 })
   }^{( \xi_{l + s })}
   \in
   \big\{
   x_{ \alpha_{l + 1}( i_{ l + 1}, i_{ l + 2 })},
  x_{ \alpha_{l + 1}( i_{ l + 1},i_{l+2})}^\ast
   \big\}
   $,
  therefore
 \[
 \varphi \big(
 x_{ \alpha_{l + 1}( i_{ l + 1}, i_{ l + 2 })}^{( \xi_{ l + 1})}
 x_{ \alpha_{l + 2}( i_{ l + 2}, i_{ l + 3 })}^{( \xi_{ l + 2})}
 \cdots
 x_{ \alpha_{l + p}( i_{ l + p}, i_{ l + p + 1 })}^{( \xi_{ l + p})}
 \big)
  =
  v_{ \oxi, N }(B).
 \]
 hence
 \begin{align*}
   \frac{1}{N}
   \sum_{
     \substack{ \oi \in [ N]^n \\
      \iota( \oia, \oxi, \oi) = \sigma } }
   \varphi_{\sigma}
    \big[
    x_{ \alpha_1( i_1, i_2)}^{( \xi_1 )}, &
    x_{ \alpha_2( i_2, i_3)}^{( \xi_2 )},
    \cdots,
   x_{ \alpha_n( i_n, i_1)}^{( \xi_n )}
    \big]\\
    & =
   \frac{1}{N} \cdot
    \#\{
    \oi\in [ N ]^n:\ \iota(\oia, \oxi, \oi) = \sigma \}
    \cdot
     \prod_{B \in \sigma} v_{ \oxi, N}(B).
 \end{align*}
If
$ \sigma \notin \alt ( \oia, \oxi, \oi ) $,
fix
$ \theta \in (1, 2) $
and denote
$ \sigma^\prime = \{ B \in \sigma: B = \oxi \textrm{-alternating}\} $,
$ \sigma^{\prime\prime} = \{ B \in \sigma:
 B \notin \oxi \textrm{-alternating}\} $.

We have that
\begin{align*}
\frac{1}{N} \cdot &
    \#\{
    \oi\in [ N ]^n:\
     \iota(\oia, \oxi, \oi) = \sigma \}
    \cdot
     \prod_{B \in \sigma} v_{ \oxi, N}(B)\\
     = &
     \big(
\frac{1}{ N^{ \theta + r - 1 }} \cdot
\# \{\oi \in [ N ]^n:\
\iota( \oia, \oxi, \oi) = \sigma
\}
     \big)
     \cdot
     \big(
   \prod_{ B \in \sigma^\prime} N \cdot v_{ \oxi, N }(B)
     \big)
     \cdot
     \big(
     \prod_{ B \in \sigma^{ \prime \prime}}
     N^{\varepsilon(\sigma)}
  \cdot v_{ \oxi, N }(B)   \big),
\end{align*}
where
$ \displaystyle
\varepsilon(\sigma)=
 \frac{\theta + r -( \# \sigma^\prime +2)}{n}  < 1 $.

  Since, according to Lemma \ref{lemma:4}, respectively condition 2., the fist factor, respectively the third factors in the product above cancel as
  $ N \rightarrow \infty $,
  it follows that, if
  $ \sigma \notin \alt (\oia, \oxi, \oi) $,
 \[
 \lim_{ N \rightarrow \infty}
 \frac{1}{N} \cdot
     \#\{
     \oi\in [ N ]^n:\
      \iota(\oia, \oxi, \oi) = \sigma \}
     \cdot
      \prod_{B \in \sigma} v_{ \oxi, N} = 0.
 \]

 If
 $ \sigma \in \alt( \oia,\oxi, \oi) $,
 then all blocks of $\sigma $ are
 $\oxi$-alternating and Lemma \ref{lemma:4} gives that
 \begin{align*}
 \lim_{ N \rightarrow \infty}
  \frac{1}{N} \cdot
      \#\{
      \oi\in [ N ]^n:\
       \iota(\oia, \oxi, \oi) = \sigma \} &
      \cdot
       \prod_{B \in \sigma} v_{ \oxi, N}(B)
       =
        \lim_{ N \rightarrow \infty}
        \prod_{ B \in \sigma}
        N \cdot v_{ \oxi, N}(B)\\
        = &
  \prod_{ B \in \sigma^-} \alpha_{\#B}
     \cdot
     \prod_{B \in \sigma^+} \beta_{\#B}
 \end{align*}
 hence the conclusion.
\end{proof}

  To formulate the following consequence of the Theorem above, we need first to define (following \cite{mp3}) the notion of  partial $m$-transpose of a square matrix.
  \begin{defn}
  Let $m, n $ be two positive integers and
  $ A \in M_{mn}(\mathcal{A})$. We define the \emph{partial $m $-transpose} of $ A $ as follows.
  We see $ A $ as a $ m \times m $ matrix
  $ A = [ A_{ij}]_{i,j=1}^n $,
  with entries in
  $ M_n(\mathcal{A} )$.
  The partial $ m$-transpose of $ A $ is the matrix
  $A^{\Gamma,m} = [ A_{i,j}^T]_{i, j=1}^n $
  obtained by transposing (as a $n \times n $ matrix)
  of the $m^2$ blocks of $ A $.
  \end{defn}

\begin{conseq}
Suppose that
 $\big( m(N)\big)_N $
 and
 $ \big(n(N)\big)_N $
 are two non-decreasing sequences of positive integers.
 Furthermore, suppose that
 $ \big( X_N \big)_N $
 is a sequence of
 $ m(N)n(N) \times m(N)n(N) $ random matrices with   identically distributed and
 Boolean independent entries from  $(\mathcal{A},\varphi)$  that satisfy the conditions 1. and 2. from Theorems \ref{thm:4.1} and
 \ref{thm:5.1}. Then $ X_N $ and its partial $m $-transpose $X_N^{\Gamma, m(N)} $  are asymptotically (as
  $ N \rightarrow \infty $)
   Boolean independent if and only if
   $ \displaystyle \lim_{N \rightarrow \infty} n(N) = \infty $.
\end{conseq}
\begin{proof}
Let
\[ \Delta_{m(N)}= \{ (i, j) \in [ n(N) m(N)]^2: i\equiv j (\text{mod } n(N))\}
\]
i.e.  $ \Delta_{m(N)} $ is the set of all
 entries of the diagonal of some
 $n(N) \times n(N) $
 block of $ X_N $.
 Note that
 \[
 \#\Delta_{m(N)} = n(N) \cdot m(N)^2 = \frac{1}{n(N)}\cdot \big( m(N) n(N)\big)^2.
\]
 Also, if for   each $ N $, let $ \alpha(N) $ be the permutation
  on
  $ [ m(N) n(N)]^2 $ such that
  $ X_N^{\lceil \alpha(N)\rceil} = X_N^{\Gamma, m(N)}$, we have that
  \begin{align*}
  \alpha(N)(i, j)& =  (i, j) \textrm{ if $(i, j)\in \Delta_{m(N)}$}\\
 \alpha(N)(i, j)& \notin \{ (i, k), (k, j) : k \in [ m(N) n(N)] \} \text{  if $(i, j) \notin \Delta_{m(N)} $},
  \end{align*}
therefore
  \[
  \{ (i, j) \in [ m(N)n(N)]^2:\ \alpha(N)(i, j) \in \{ (i, k), (k, j)\}
   \textrm{ for some $ k \in [ m(N)n(N)]$ } \}
    = \Delta_{m(N)}.
  \]

 Thus, if
 $ \displaystyle \lim_{N \rightarrow \infty} n(N) = \infty $, the conclusion follows from  Theorem \ref{thm:5.1}.

 Suppose now that  $ X_N $ and $X_N^{\Gamma, m(N)} $
 are asymptotically Boolean independent. Then
 \[ \lim_{N \rightarrow \infty}
  \varphi \circ \tr \big(X_N^\ast \cdot X_N^{\Gamma, m(N)} \big)
 =
 \lim_{N \rightarrow \infty}
  \varphi \circ \tr\big( X_N^\ast)
 \cdot
 \lim_{N \rightarrow \infty} \varphi \circ \tr
 \big(
 X_N^\ast \cdot X_N^{\Gamma, m(N)}
 \big) = 0.
 \]
 On the other hand,
 \begin{align*}
 \varphi \circ \tr
 \big(
 X_N^\ast \cdot X_N^{\Gamma, m(N)}
  \big)
& = \frac{1}{m(N)n(N)}
 \sum_{i, j\in [m(N)n(N)]}
  \varphi
 ( x_{ij, N}^\ast x_{\alpha(N)(i, j), N})\\
 = &
 \frac{1}{[m(N)n(N)]^2}
 \sum_{(i, j)\in \Delta_{m(N)}}
 [ m(N)n(N)
 \varphi ( x_{ij, N}^\ast x_{ij, N} ) ]\\
  = &
  \frac{1}{n(N)}[ m(N)n(N)
  \varphi ( x_{ij, N}^\ast x_{ij, N} ) ]
 \end{align*}
  and the conclusion follows from condition 1. of Theorem \ref{thm:5.1}.
\end{proof}

% % % % % % % % % % % % % % % % % % % % % % %
% % % % % % % % % % % % % % % % % % % % %

\section{Self-adjoint random matrices and Bernoulli distributed non-commutative variables}

%%%%%%%%%%%%%%%%%%%%%%%%%%%%%%%%%%%%%%%%%%%%%%%%%%%
%%%%%%%%%%%%%%%%%%%%%%%%%%%%%%%%%%%%%%%%%%%%%%%%%%%
%%%%%%%%%%%%%%%%%%%%%%%%%%%%%%%%%%%%%%%%%%%%%%%%%%%
%%%%%%%%%%%%%%%%%%%%%%%%%%%%%%%%%%%%%%%%%%%%%%%%%%%

\begin{thm}
 Let
  $ B_N  = [ b_{ij, N}]_{i, j=1}^N $
  be a self-adjoint matrix in
   $ M_N(\mathcal{A}) $
   such that the family
   $\{ b_{ij, N}:\ 1 \leq i \leq j \leq N \} $
   is Boolean independent and
   \begin{enumerate}
   \item[(1)] the  variables
    $ \{ b_{ij, N}: 1 \leq i < j \leq N \} $
    are identically distributed such that
    \begin{enumerate}
    \item[(1.1)] $
     \displaystyle \lim_{N \rightarrow\infty}
     N^\varepsilon \varphi(b_{ij, N}^{\xi_1}b_{ij, N}^{\xi_2}  \cdots b_{ij, N}^{\xi_n} )= 0
     $
     for any $ \varepsilon < 1 $ and
      $ \xi_j \in \{ 1, \ast\} $
      \item[(1.2)] $ \displaystyle
     \lim_{N \rightarrow \infty} N \varphi \big( ( b_{ ij, N} b_{ij, N}^\ast)^m \big) =
 \lim_{N \rightarrow \infty} N \varphi \big( (b_{ij, N}^\ast b_{ ij, N} )^m \big) =0 $
 for all $ m  > 1 $
    \item[(1.3)] $ \displaystyle \lim_{ N \rightarrow \infty} N \varphi\big( b_{ij, N} b_{ij, N}^\ast\big) = \alpha$
    and
$ \displaystyle \lim_{N \rightarrow \infty}
N \varphi\big( b_{ij, N}^\ast b_{ij, N}\big) = \beta $
for some $ \alpha, \beta > 0 $.
    \end{enumerate}
    \item[(2)] the variables $\{ b_{ii, N } : i \in [ N ] \} $
    are self-adjoint, identically distributed such that for any positive integer $n $
    and any $ i \in [ N ] $,
    \[  \lim_{N \rightarrow\infty} \varphi\big( b_{ii, N}^n\big) = 0 .\]
   \end{enumerate}
   Then the asymptotic distribution of $ B_N $  does exist and all its odd moments asymptotically vanish. Moreover, $ B_N $ is asymptotically Bernoulli distributed if and only if $ \alpha = \beta $.
    \end{thm}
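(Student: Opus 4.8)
The plan is to compute the limit of the moments $\varphi\circ\tr(B_N^n)$ directly, following the combinatorial scheme of Theorem \ref{thm:4.1}. First I would expand
\[
\varphi\circ\tr(B_N^n)=\frac{1}{N}\sum_{\oi\in[N]^n}\varphi\big(b_{i_1i_2,N}\,b_{i_2i_3,N}\cdots b_{i_ni_1,N}\big),
\]
and observe that, since $B_N$ is self-adjoint, the entry $b_{i_si_{s+1},N}$ lies in the (non-unital) subalgebra generated by the independent variable attached to the \emph{unordered} pair $\{i_s,i_{s+1}\}$. Grouping maximal runs of consecutive factors sharing the same unordered pair defines an interval partition $\iota(\oi)\in\mathcal{I}(n)$, the self-adjoint analogue of Definition \ref{iota:1}; Boolean independence then factorizes the expectation as $\prod_{B\in\iota(\oi)}v(B)$, where $v(B)$ is the moment of the corresponding alternating word in a single entry (which, by identical distribution, depends only on $B$ and on whether its pair is diagonal or off-diagonal).

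Next I would classify each block $B$ of $\iota(\oi)$ as \emph{diagonal} (its pair is $\{a,a\}$) or \emph{off-diagonal} ($\{a,b\}$ with $a\neq b$), and according to parity. The scaling of $v(B)$ is read off from the hypotheses: an off-diagonal block of size $2$ satisfies $N\,v(B)\to\alpha$ or $\beta$ by (1.3) (depending on whether $a<b$ or $a>b$), an even off-diagonal block of size $>2$ has $N\,v(B)\to0$ by (1.2), an odd off-diagonal block has $N^{1-\delta}v(B)\to0$ for every $\delta>0$ by (1.1), and a diagonal block has $v(B)\to0$ by (2). To count the index tuples I would use a ``junction plus leaf'' structure: inside a block the indices alternate between two values, consecutive blocks share a junction vertex, even and diagonal blocks identify adjacent junctions while odd off-diagonal blocks keep them distinct. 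Weighing the resulting number of free vertices against the powers of $N$ produced by the $v(B)$ shows that every configuration contributes $o(1)$ unless all blocks are off-diagonal of size $2$, that is, unless $\iota(\oi)$ is the unique interval pairing; this forces $n=2m$ and a \emph{star} pattern $i_1=i_3=\cdots=a$, $i_{2k}=d_k$.

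For $n$ odd there is no interval pairing, so $\lim_N\varphi\circ\tr(B_N^n)=0$, giving the vanishing of odd moments. For $n=2m$ the surviving term is
\[
\lim_{N\to\infty}\frac{1}{N}\sum_{a}\ \sum_{d_1,\dots,d_m}\ \prod_{k=1}^m\varphi\big(b_{ad_k,N}\,b_{d_ka,N}\big),
\]
where the key observation is that each factor equals $\varphi(b_{ad_k,N}b_{ad_k,N}^\ast)$ when $d_k>a$ and $\varphi(b_{ad_k,N}^\ast b_{ad_k,N})$ when $d_k<a$, so summing over the leaf $d_k$ produces $\alpha\,(1-a/N)+\beta\,(a/N)+o(1)$; the adjacency constraints $d_k\neq d_{k\pm1}$, $d_k\neq a$ only remove lower-order terms. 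The outer average $\frac1N\sum_a$ is then a Riemann sum, and I would conclude
\[
\lim_{N\to\infty}\varphi\circ\tr(B_N^{2m})=\int_0^1\big(\alpha(1-x)+\beta x\big)^m\,dx,
\]
which in particular establishes that the asymptotic distribution exists.

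Finally, the Bernoulli criterion follows by inspecting this integral. If $\alpha=\beta$ the integrand is the constant $\alpha^m$, so the even moments equal $\alpha^m$, exactly those of a Bernoulli variable of variance $\alpha$. Conversely, if $B_N$ is asymptotically Bernoulli of some variance $\gamma$, then $\int_0^1(\alpha(1-x)+\beta x)^m\,dx=\gamma^m$ for all $m$; comparing $m=1$ and $m=2$ gives $\gamma=(\alpha+\beta)/2$ and $(\beta-\alpha)^2/12=0$, forcing $\alpha=\beta$. The hard part will be the tuple-counting and scaling step of the second paragraph: unlike in the $B$-diagonal computations, the leading term is not a single product of $\alpha$'s and $\beta$'s but an average over the location of the central vertex, and one must verify carefully that all non-pairing interval partitions (in particular those carrying diagonal or odd off-diagonal blocks) are genuinely negligible.
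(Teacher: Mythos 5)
Your proposal is correct and follows essentially the same route as the paper's proof: expand the normalized trace, group maximal runs of equal unordered index pairs into an interval partition, use the scaling hypotheses (1.1)--(1.3) and (2) together with a counting argument to kill every configuration except the unique interval pairing with its star-shaped index pattern, and evaluate the surviving contribution as the Riemann sum $\int_0^1\big(\alpha(1-x)+\beta x\big)^m\,dx$. The only notable difference is presentational: the paper organizes the counting via Lemma \ref{lemma:4} plus an induction removing diagonal blocks, while you do a direct junction/leaf count, and you make explicit the $m=1,2$ moment comparison for the converse of the Bernoulli criterion, which the paper leaves implicit.
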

 To simplify the writing, we will omit the index $ N $, using
  $ b_{ij} $ for $ b_{ ij, N} $.
\begin{proof}
 The condition that $ B_N  $ is self-adjoint gives that
  $ b_{jk} \in \{ b_{ij}, b_{ij}^\ast \} $
  only if $ k = j $. Thus, denoting
  $ \overrightarrow{\zeta} = (1, \ast, 1, \ast, \dots) $
  we have that
  \begin{align*}
  \varphi \circ \tr \big( B_N^n\big)
   & = \sum_{ \oi \in [ N ]^n }
   \frac{1}{N}
   \varphi( b_{ i_1 i_2} b_{i_2 i_3} \cdots b_{i_n i_1})\\
    &= \sum_{ \sigma \in \mathcal{I}(n)}
  \sum_{ \substack{ \oi \in [ N ]^n\\ \iota( \overrightarrow{\zeta}, \oi) = \sigma}}
  \frac{1}{N} \varphi_{\sigma} [ b_{i_1i_2}, b_{i_2 i_3}, \dots, b_{i_ni_1}]
  \end{align*}
  For
  $ \sigma \in \mathcal{I}(n) $, define
  \begin{align*}
  A(\sigma)&= \{ \oi \in [ N ]^n:\ \iota(\overrightarrow{\zeta}, \oi) = \sigma \text{ and } i_k \neq i_{ k + 1} \text{ for all } k \in [ N ] \}\\
  B ( \sigma, k)  &= \{ \oi \in [ N ]^n:\
 \iota(\overrightarrow{\zeta}, \oi) = \sigma \text{ and } i_k = i_{ k + 1} \}.
  \end{align*}
 First we shall show by induction on $ \# \sigma $ that
 \begin{equation}\label{eq:22}
 \left\{
 \begin{array}{l}
 \displaystyle \lim_{N \rightarrow \infty}
 \sum_{ \oi \in B( \sigma, k)}
 \frac{1}{N} \varphi_{\sigma} [ b_{i_1i_2}, b_{i_2 i_3}, \dots, b_{i_ni_1}]
   =0
 \\
 \displaystyle \limsup_{ N \rightarrow \infty}
 \sum_{ \substack{ \oi \in [ N ]^n\\ \iota( \overrightarrow{\zeta}, \oi) = \sigma}}
   \frac{1}{N}  |\varphi_{\sigma} [ b_{i_1i_2}, b_{i_2 i_3}, \dots, b_{i_ni_1}] |
   < \infty.
 \end{array}
 \right.
 \end{equation}
 Remark that if
 $ i_k = i_{ k + 1} $
 and if
 $ B = \{ l+1, l+2, \dots, l+p \} $
 is the block of $ \sigma $ that contains $k $,
 then
 $ b_{{i_k}, i_{k+1}} = b_{i_{l+1}i_{l+2}} =
 \dots = b_{i_{l+p}i_{l+p+1}} $,
  therefore
 $ i_{l+1} = i_{ l+2} = \dots = i_{l+p +1} $.

 If  $ \sigma $ has a single block, then (\ref{eq:22}) is trivial.
  For the induction step, let us suppose  that
 $ \oi \in B( \sigma, k ) $
  and that
 $ \sigma = \sigma_1 \oplus [ p ] \oplus \sigma_2 $
 with
 $ \sigma_1 \in \mathcal{I}(m) $,
 $ \sigma_2 \in \mathcal{ I }( q) $
 such that
 $ m< k \leq m + p + 1 $
 and
 $ n = m + p + q $
 (i.e. the block containing $ k $ has $ p $ elements).

  Since
  $ i_k = i_{ k+1}$,
  we have that
  $ i_{m+1} = i_{m+1} = \dots = i_{ m +p + 1}$,
  therefore
  \begin{align*}
  \varphi_{\sigma}
  \big[
  b_{i_1 i_2}, \dots, b_{i_n i_1}
  \big] &= \varphi_{ \sigma_1}
  \big[ b_{i_1 i_2}, \dots ,b_{i_{m-1} i_m}
  \big]
  \cdot \varphi\big( b_{i_k i_{k+1}}^p \big)
  \cdot
  \varphi_{\sigma_2}
  \big[
   b_{i_{m+ p + 1} i_{m+p + 2}}, \dots, b_{i_n i_1}\big]\\
   & = \varphi\big( b_{i_k i_{k+1}}^p \big)
   \cdot
   \varphi_{ \sigma_1 \oplus \sigma_2 }
   \big[
   b_{i_1i_2}, \dots ,b_{i_{ m-1} i_{m}},
   b_{ i_m i_{ m+ p + 2}}, \dots, b_{i_ni_1}
   \big]
  \end{align*}
 and (\ref{eq:22}) follows from the induction hypothesis.

Conditions (1.2) and (1.3) give that, for any
   $ k, l \in [ N ] $ such that $ k \neq l $
   \begin{align*}
   \limsup_{N \rightarrow \infty} &
    | N \cdot\varphi\big( b_{kl} b_{kl}^\ast \big)|
   \leq \alpha + \beta \\
   \limsup_{ N \rightarrow \infty} &
    | N \cdot  |\varphi\big( (b_{kl} b_{kl}^\ast)^m \big)|
   =0 \textrm{  if $ m > 1 $.}
   \end{align*}
   Also,
   $ \# A(\sigma) \leq \# \{ \oi\in [ N ]^n: \iota( \overrightarrow{\zeta}, \oi) = \sigma \} $.
 Therefore, if $ \sigma $ is not
 $\overrightarrow{\zeta}$-alternating,
  Lemma \ref{lemma:4} gives
  \[
  \lim_{N \rightarrow\infty}
\sum_{ \substack{ \oi \in [ N ]^n\\ \iota( \overrightarrow{\zeta}, \oi) = \sigma}}
     \frac{1}{N}
 \varphi_{\sigma} [ b_{i_1i_2}, b_{i_2 i_3},
  \dots, b_{i_ni_1}]
  =0,
  \]
  whilst, if
 $ \sigma $ is not
 $ \overrightarrow{\zeta} $-alternating,
 we have that
 \begin{align*}
\limsup_{ N \rightarrow \infty}
 \sum_{ \substack{ \oi \in [ N ]^n\\ \iota( \overrightarrow{\zeta}, \oi) = \sigma}}
   \frac{1}{N} & |\varphi_{\sigma} [ b_{i_1i_2}, b_{i_2 i_3}, \dots, b_{i_ni_1}] |\\
    & =
  \limsup_{ N \rightarrow \infty}
   \sum_{ \oi \in A ( \sigma)}
     \frac{1}{N}  |\varphi_{\sigma} [ b_{i_1i_2}, b_{i_2 i_3}, \dots, b_{i_ni_1}] | \\
     & \leq
     \# A (\sigma) \cdot N^{ \# B } \cdot
     (\alpha + \beta)^{ \#\{ B \in \sigma :\ \#B =2 \} }
       \cdot
       0^{ \#\{ B \in \sigma:\ \#B \neq 2 \}},
 \end{align*}
 hence the proof of (\ref{eq:22}) is complete.

Moreover, the argument above gives that
 \[
  \lim_{N \rightarrow\infty}
\sum_{ \substack{ \oi \in [ N ]^n\\ \iota( \overrightarrow{\zeta}, \oi) = \sigma}}
     \frac{1}{N}
 \varphi_{\sigma} [ b_{i_1i_2}, b_{i_2 i_3},
  \dots, b_{i_ni_1}]
  =0
  \]
unless $ \sigma $ is an interval pair partition. Thus, for $ n $ odd,
\[
\lim_{ N \rightarrow \infty} \varphi \circ \tr \big( B_N^n\big) = 0,
 \]
and, if $ n = 2r $ is even, for
$ \tau = [ 2, 4, \dots, 2r ] $,
\begin{align*}
\lim_{ N \rightarrow \infty}
 \varphi \circ \tr \big( B_N^n\big)
 = \lim_{N \rightarrow \infty}
 \sum_{ \oi \in A(\tau) }
 \frac{1}{N}
 \varphi_{\tau}
 \big[ b_{ i_1 i_2}, \dots, b_{i_n i_1}\big].
\end{align*}
By definition,
$ \oi \in A(\tau) $
if and only if
$ i_{2k-1} = i_1 $
and
 $ i_1 \neq i_{ 2k } \neq i_{ 2k+2} \neq i_1 $
for all $ k \in [r ] $.  In particular,
 \[
 \# A( \tau) = N \cdot ( N -1) \cdot ( N -2)^{ r -1}
 \]
 Thus, denoting $ i_1 = 1 $, $ j_0 = 0 $ and $ i_{2s} = j_s $ for $ s \in [ r ] $,
  we have that
 \begin{align*}
 \sum_{ \oi \in A(\tau) }
  \frac{1}{N}
  \varphi_{\tau}
  \big[ b_{ i_1 i_2}, \dots, b_{i_n i_1}\big]
  &=
  \sum_{ \oi \in A(\tau) }
    \frac{1}{N}[
    \prod_{ s \in [ r ] }
  \varphi \big( b_{i_1 i_{2s}} b_{i_{2s} i_1}^\ast  \big)]\\
 & =
 \frac{1}{N} \sum_{ i =1}^N \prod_{ s =1}^r \big(
 \sum_{ \substack{j_s \in [ N ]\\ i \neq j_s \neq j_{ s-1}}}
 \varphi( b_{ij_s} b_{ij_s}^\ast )
 \big).
 \end{align*}

 For $ i< j $, denote
 $ \alpha _N = \varphi\big( b_{ij, N} b_{ij, N}^\ast\big) $
 and
 $ \beta_N = \varphi \big( b_{ij, N}^\ast b_{ij, N}\big) $.
 In particular
 $ \displaystyle \lim_{N \rightarrow \infty} \alpha_N = \alpha $
 and
 $ \displaystyle \lim_{ N \rightarrow \infty} \beta_N = \beta $.
  Since $ i - 1 $ elements of $ [ N ] $ are strictly less that $ i $ and $ N - i $ are strictly greater than $ i $, we have that
  \[
  \Big(
  \alpha_N \frac{i -1}{N} + \beta_N \frac{N -i -1}{N}
  \Big)^r
  <
    \prod_{ s =1}^r \big(
   \sum_{ \substack{j_s \in [ N ]\\ i \neq j_s \neq j_{ s-1}}}
   \varphi( b_{ij_s} b_{ij_s}^\ast )
   \big)
   <
    \Big(
     \alpha_N \frac{i }{N} + \beta_N \frac{N -i }{N}
     \Big)^r
  \]
  therefore
  \[
  \lim_{N \rightarrow \infty}
  \frac{1}{N} \sum_{ i =1}^N \prod_{ s =1}^r \big(
   \sum_{ \substack{j_s \in [ N ]\\ i \neq j_s \neq j_{ s-1}}}
   \varphi( b_{ij_s} b_{ij_s}^\ast )
   \big)= \int_{0}^1 \big( \alpha x + \beta(1-x)\big)^r dx
 \]
  that is
  \[
  \lim_{N \rightarrow \infty}\varphi \circ \tr\big( B_N^{2r} \big)
     = \left\{
     \begin{array}{l}
     \alpha^r , \text{ if $ \alpha = \beta $}\\
     \displaystyle \frac{\alpha^{r+1} - \beta^{r+1}}{(r+1)( \alpha - \beta)}, \text{ if  } \alpha \neq \beta,
     \end{array}
     \right.
  \]
  hence the conclusion.
\end{proof}

%%%%%%%%%%%%%%%%%%%%%%%%%%%%%%%%%%%%%%%%%%%%%%%%%%%
%%%%%%%%%%%%%%%%%%%%%%%%%%%%%%%%%%%%%%%%%%%%%%%%%%%

\bibliographystyle{alpha}

\begin{thebibliography}{10}

\bibitem{dykema} K. Dykema,
\newblock On certain free product factors via an extended matrix model,
\newblock {\em J. Funct. Anal.} 112 (1) (1993) 31-60.

\bibitem{gu-s} Y. Gu, P. Skoufranis,
\newblock Bi-Boolean independence for pairs of algebras,
 \newblock arXiv preprint arXiv:1703.03072 (2017).




\bibitem {jp} Y. Jiao, M. Popa,
\newblock On fluctuations of traces of large matrices over a non-commutative algebra,
\newblock {\em J. Operator Theory} 73 (1) (2015) 71--90.

\bibitem {liu1} W. Liu,
\newblock A noncommutative de Finetti theorem for boolean independence,
 \newblock {\em J. Funct. Anal.} 269(7) (2015) 1950--1994.


\bibitem {liu2} W. Liu,
 \newblock Extended de Finetti theorems for boolean independence and monotone independence,
 \newblock arXiv preprint arXiv:1505.02215 (2015).
 
 \bibitem{mp-orth} J. A. Mingo, M. Popa,
 \newblock{Real second order freeness and Haar orthogonal matrices},
 \newblock{\em J. Math. Phys.} 54 (2013), no. 5, 051701

\bibitem {mp2} J. A. Mingo, M. Popa,
\newblock  Freeness and the transposes of unitarily invariant random matrices,
\newblock {\em J. Funct. Anal.} 271 (4) (2016), 883--921.

\bibitem{mp3} J. A. Mingo, M. Popa,
\newblock{Freeness and the partial transposes of the Wishart random matrices.}
\newblock{arXiv preprint arXiv:1706.06711} (2017).

\bibitem{mingo-speicher-gaussian} J. A. Mingo, R. Speicher,
\newblock{Second order freeness and fluctuations of random matrices. I. Gaussian and Wishart matrices and cyclic Fock spaces.}
\newblock{ \em J. Funct. Anal. } 235 (2006), no. 1, 226--270.

 \bibitem{ms} J. A. Mingo, R. Speicher, Free Probability and Random Matrices, Fields
Institute Monographs 35, Springer, 2017.

\bibitem{ns-rdiag} A. Nica, R. Speicher, $ R $-diagonal pairs, a common approach to Haar unitaries and circular elements, 
\newblock{Fields Inst. Commun.} 12 (1997), 149--188.


\bibitem {ns} A. Nica, R. Speicher,
\newblock Lectures on combinatorics of free probability, London Math Soc Lecture Note Series 335, Cambridge Univ. Press 2006.

\bibitem{popa09} M. Popa,
\newblock  A new proof for the multiplicative property of the boolean cumulants with applications to the operator valued case,
\newblock  {\em Colloq. Math.} 117 (1) (2009) 81--93.

\bibitem{popa-h} M. Popa, Z. Hao,
\newblock A combinatorial result on asymptotic independence relations for random matrices with non-commutative entries,
\newblock arXiv preprint arXiv:1704.06324 (2017).

\bibitem{popa-v} M. Popa, V. Vinnikov,
\newblock  Non-commutative functions and the non-commutative free Levy-Hincin formula,
\newblock  {\em  Adv. Math.} 236 (2013) 131--157.



\bibitem {ryan} O. Ryan,
\newblock On the Limit Distributions of Random Matrices with independent or free entries,
\newblock {\em Comm. Math. Phys.} 193(3) (1998) 595--626.

\bibitem{schurmann} M. Schurmann, A. Ben Ghorbal,
\newblock{ Non-commutative notions of stochastic independence,}
\newblock{ \em Math.Proc. Cambridge Philos. Soc.} 133 (2002), no. 3, 531--561




\bibitem {s-w} R. Speicher, R. Woroudi,
\newblock Boolean convolution,
\newblock {\em Fields Inst. Commun.} 12 (1997) 267--279.


\bibitem {s} R. Speicher,
\newblock Combinatorial Theory of the Free Product with amalgamation and Operator-Valued Free Probability Theory,
\newblock {\em Mem. Amer. Math. Soc.} 132 (627) (1998).

 \bibitem {Tao} T. Tao,
 Topics in random matrix theory, Vol. 132. Providence, RI: American Mathematical Society, 2012.

\bibitem {v1} D. Voiculescu,
\newblock Limit laws for random matrices and free products,
\newblock {\em Invent. Math.} 104 (1991) 201--220.

\bibitem{wanderfelds} W. von Waldenfels,
 \newblock {An approach to the theory of pressure broadening of spectral lines,}
\newblock { \em Lecture Notes in Mathematics}, vol. 296, Springer, Heidelberg, 1973, 19--69


\bibitem {w1} E. P. Wigner,
\newblock Characteristic vectors of bordered matrices with infinite dimensions,
\newblock {\em  Ann. Math.}  62 (2) (1955) 548--564.

\bibitem {w2} E. P. Wigner,
\newblock On the distribution of the roots of certain symmetric matrices,
\newblock {\em  Ann. Math.}  67 (2) (1958) 325--327.




\end{thebibliography}

%%%%%%%%%%%%%%%%%%%%%%%%%%%%%%%%%%%%%%%%%%%%%%%
%%%%%%%%%%%%%%%%%%%%%%%%%%%%%%%%%%%%%%%%%%%%%%%

\end{document}